\newcommand\NoBlackBoxes{\global\overfullrule0pt}
\newcommand{\ba}{\begin{eqnarray} }
\newcommand{\ea}{\end{eqnarray} }
\newcommand{\N}{\mathbb{N}}
\newcommand{\Ns}{\mathscr{N}}
\let\serieslogo@\relax
\let\@setcopyright\relax
\newtheorem{definition}{Definition}[section]
\newtheorem{theorem}[definition]{Theorem}
\newtheorem{lemma}[definition]{Lemma}
\newtheorem{proposition}[definition]{Proposition}
\newtheorem{corollary}[definition]{Corollary}
\renewcommand{\P}{{\mathbb{P}}}
\newcommand{\R}{{\mathbb{R}}}
\renewcommand{\epsilon}{\varepsilon}
\renewcommand{\phi}{\varphi}
\numberwithin{equation}{section}
\begin{document}

\setcounter{page}{1}

\title[Multi-group Binary Choice with Social Interaction]{Multi-group Binary Choice with Social Interaction and a random communication structure --  a random graph approach}

\author[Matthias L\"owe]{Matthias L\"owe}
\address[Matthias L\"owe]{Fachbereich Mathematik und Informatik,
Universit\"at M\"unster,
Einsteinstra\ss e 62,
48149 M\"unster,
Germany}

\email[Matthias L\"owe]{maloewe@math.uni-muenster.de}

\author[Kristina Schubert]{Kristina Schubert}
\address[Kristina Schubert]{Fakult\"at f\"ur  Mathematik,
Technische Universit\"at Dortmund,
Vogelpothsweg 87,
44227 Dortmund,
Germany}

\email[Kristina Schubert]{kristina.schubert@tu-dortmund.de}

\author[Franck Vermet]{Franck Vermet}
\address[Franck Vermet]{Laboratoire de Math\'ematiques de Bretagne Occidentale,
Universit\'e de Bretagne Occidentale,
6, Avenue Victor Le Gorgeu,
29238 BREST Cedex 3,
FRANCE}

\email[Franck Vermet]{franck.vermet@univ-brest.fr}


\subjclass[2010]{Primary:82B26, 82B44 Secondary: 60F10, 91B50}

\keywords{Ising model, Curie-Weiss model, equilibrium statistical mechanics, block model, graphical models, random graphs, social interaction, large deviations}

\newcommand{\wlim}{\mathop{\hbox{\rm w-lim}}}
\newcommand{\na}{{\mathbb N}}
\newcommand{\re}{{\mathbb R}}

\newcommand{\vep}{\varepsilon}

\begin{abstract}
We construct and analyze a random graph model for discrete choice with social interaction and several groups of equal size. We concentrate on the case of two groups of equal sizes and we allow the interaction strength within a group to differ from the interaction strength between the two groups.
Given that the resulting graph is sufficiently dense we show that, with probability 1, the average decision in each of the two groups is the same as in the fully connected model. In particular, we show that there is a phase transition: If the interaction among a group and between the groups is strong enough the average decision per group will either be positive or negative and the decision of the two groups will be correlated. We also compute the free energy per particle in our model.
\end{abstract}

\maketitle

\section{Introduction}
As the study of social phenomena, like  decision making processes or voting, has become the subject of various scientific disciplines, a variety of approaches has emerged towards such topics.  Accordingly, different aspects are stressed from an economic and a   sociological point of view:  While the role of individual
preferences (see \cite{becker}) is usually in the focus af economic models, in sociological models individuals are regarded as members of a group and the individual's behaviour is essentially determined by the behaviour of the group (see e.g.~\cite{coleman}, \cite{bourdieu}).

A unifying approach are so called social interaction models. First attempts to use such models go back to Schelling \cite{schelling71}. F\"ollmer \cite{foellmer74} used the theory of Markov random fields from statistical physics to furnish this approach with a rigorous mathematical framework. In the 1990s and early 2000s interacting spins systems were discovered as a model for (mostly binary) discrete choice problems with social interaction, see e.g.~\cite{brockdurlauf}, \cite{EKT03}, \cite{horst}, \cite{contloewe}, \cite{horstscheinkman}, or \cite{kl07}. From this list \cite{brockdurlauf} is particularly interesting for our paper, because it gives a reinterpretation of the Curie-Weiss model from statistical physics in terms of discrete choice models with interactions. Our contribution will be to extend this result to two groups and a random communication structure.

The considerations of decision making in more than one group, where the members of one group interact with one interaction strength, while members of two groups interact with a different strength, led to so-called bipartite Curie-Weiss models, that were analyzed in a statistical mechanics context, see e.g.~see \cite{gallo_contucci_CW}, \cite{gallo_barra_contucci}, \cite{fedele_contucci}, \cite{collet_CW}, or \cite{opoku}. These models were also considered from a statistical perspective recently by Berthet, Rigollet and Srivastava \cite{BRS17_blockmodel} as a version of a statistical block model.
Such block models have been in the center of interest in statistics and probability theory over the past couple of years (see, e.g.~\cite{AL18}, \cite{GMZZ17}, \cite{BRS17_blockmodel}). The statistical interest arises from their relation to graphical models, while from a probabilistic point of view they can be considered to model social interactions with respect to certain decisions, see e.g.~\cite{BGA08}. In this framework a major question is always how to reconstruct the block structure under sparsity assumptions (see e.g.~\cite{BMS13}, \cite{MNS16}, \cite{Bre15}).

In the model from \cite{BRS17_blockmodel}, that has been studied earlier in \cite{gallo_contucci_CW}, \cite{fedele_contucci}, and also later in \cite{werner_curie_weiss}, \cite{LSblock1}, and \cite{werner_curie_weiss2} and
that is interesting both from a probabilistic and a statistical perspective, one partitions the set $\{1, \ldots, N\}$ into a set $S\subset \{1, \ldots, N\}$ and its complement $S^c$. This segmentation induces a
partitioning of the binary hypercube $\{-1,+1\}^N, N\in \N$, the state space of the {\it Ising block model}.
The authors then consider a situation, where the interaction between spins in $S$ resp.~in $S^c$ is stronger than the interaction between spins that belong to different blocks. In \cite{BRS17_blockmodel} the authors describe the statistical mechanics of these models and show how to efficiently reconstruct the blocks $S$ and $S^c$ from observations of the model. In the context of \cite{BRS17_blockmodel} the partitioning is always such that $|S|= N/2$ (for $N$ even) and that there are two blocks only. In some papers that were cited above, e.g.~\cite{fedele_contucci}, a more general set-up with variable block sizes and more than two blocks was considered. In this more general situation it seems non-trivial to give a verifiable condition for the existence of a phase transition and a description of the equilibrium points (note, however, that some results were obtained in \cite{KLSS19}). We will therefore start with the situation described in \cite{BRS17_blockmodel} as a reference model.

The result in \cite{BRS17_blockmodel}
 are very nice and interesting. However, from the point of view of graphical models as well as from the viewpoint of describing social interactions, their model might be considered a bit simplistic, because every spin, i.e.~every agent in the two populations given by $S$ and $S^c$, is interacting with every other agent like in the Curie-Weiss model for ferromagnets (see \cite{Ellis-EntropyLargeDeviationsAndStatisticalMechanics}).

Indeed, this is also the set-up of the standard game theoretical models. Translated to the situation analyzed in \cite{BRS17_blockmodel} this means,
for $\beta>0$ and $\alpha \le \beta$ their model is defined by the Hamiltonian
\begin{equation}\label{standard_hamil}
\tilde H_{N,\alpha, \beta}(\sigma):= -\frac \beta {2N} \sum_{i \sim j} \sigma_i \sigma_j -\frac \alpha {2N} \sum_{i \not\sim j} \sigma_i \sigma_j, \quad \sigma \in \{-1,+1\}^N
\end{equation}
and the corresponding Gibbs measure
\begin{equation}\label{gibbs1}
\tilde \mu_{N, \alpha, \beta} (\sigma):= \frac{e^{-\tilde H_{N,\alpha, \beta}(\sigma)}}{\sum_{\sigma'}e^{-\tilde H_{N,\alpha, \beta}(\sigma')}}=:
\frac{e^{-\tilde H_{N,\alpha, \beta}(\sigma)}}{\tilde Z_{N, \alpha,\beta}}.
\end{equation}
Here we write $i \sim j$, if either $i, j \in S$ or $i,j \in S^c$ and $i \not \sim j$, otherwise. Note that this encodes interactions between every pair of spins.

One readily sees that the above model has a two-dimensional order parameter, the vector of block magnetizations, $m:=m^N:=(m^N_1, m^N_2)$, where
$$
m_1:=m^N_1:=m_1(\sigma):= \frac {2} N \sum_{i \in S} \sigma_i \qquad \mbox{and } \quad  m_2:=m^N_2:=m_2(\sigma):= \frac {2} N \sum_{i \notin S} \sigma_i.
$$

According to our interpretation of the model as a binary choice model (see the following section), we will also call $m$ the vector of {\it group decisions} or {\it group opinions}.

Indeed, one immediately sees that $m$ is an order parameter, since the Hamiltonian is handily rewritten as
\begin{equation}\label{hamil_new}
\tilde H_{N,\alpha, \beta}(\sigma)= -\frac N 8 (2 \alpha m_1 m_2 +\beta m_1^2+\beta m_2^2).
\end{equation}
In the next sections we will propose and analyze
a model that is slightly more realistic in the sense that interactions only take place between some of the spins while others do not influence each other directly (corresponding to conditional independence of the corresponding sites in a graphical model).

\section{The Model}
As mentioned above, the standard game theoretical model would be to consider groups of agents such that
all agents of all groups communicate with each other. In other words, the communication structure is described by a complete graph. This   contrasts with the  Walrasian equilibrium picture, i.e.~the traditional concept of economic equilibrium, in which
all agents communicate with the Walrasian auctioneer but not with each other directly. In this case the communication structure is star shaped with the auctioneer as the center.

However, compared to these two extreme models, it seem more plausible to assume that each agent communicates with some, though not all, other agents, thus interpolating between the two  aforementioned  models. We follow the construction in \cite{kirman83}, by assuming that  each pair of agents $(i,j)$ communicates with
probability $p=p_N$ (i.e.~there is a link between two agents with probability $p_N$), if they are in the same group and with probability  $q=q_N$, if they are in different groups. We assume that all these links exist independently from all
other links. The resulting  communication structure is then given by a corresponding inhomogeneous random graph model similar to  \cite{kirman83}.

More formally, we define indicator random variables $\varepsilon_{ij}\in \{ 0,1 \}$ for each pair of agents that are in the same group, i.e.~$i \sim j$ and, similarly, random variables $\delta_{ij}\in \{ 0,1 \}$ for each pair of agents that are in different groups, i.e.~$i \not \sim j$. The  variable $\varepsilon_{ij}$ resp.~$\delta_{ij}$ will be $1$, if agents $i$ and $j$ can communicate and $0$ otherwise.
We will assume
\begin{equation*}
\mathbb{P}(\varepsilon_{ij}=1)= 1-\mathbb{P}(\varepsilon_{ij}=0)= p_N, \quad \text{if }i\sim j
\end{equation*}
and
\begin{equation*}
\mathbb{P}(\delta_{ij}=1)= 1-\mathbb{P}(\delta_{ij}=0)= q_N, \quad   \text{if } i \not \sim j.
\end{equation*}
For technical reasons we will assume that the communication structure is asymmetric, i.e.~$\varepsilon_{ij}$
and $\varepsilon_{ji}$ are not necessarily the same (and the same for the $\delta$ variables). The case of an undirected graph, i.e.~$\varepsilon_{ij}=\varepsilon_{ji}$ and $\delta_{ij}=\delta_{ji}$,  can be dealt with using some additional arguments.
For a concise review on  random graphs in relation with economic models
see \cite{ioannides90}.

Although condition \eqref{cond_eps} and \eqref{cond_delta} are sufficient to define our model, we will specify a probability space that allows us to consider the limit $N \to \infty$.  This is in particular necessary since $\vep_{ij}=\vep_{ij}(N)$, $\delta_{ij}=\delta_{ij}(N)$, $p=p_N$ and $q=q_N$ depend on $N$.  Here, we follow the construction in \cite{BG93b}. Let $(p_N)_N$ and $(q_N)_N$ be given, non increasing sequences with $p_1=q_1=1$. For a fixed index $(i,j) \in \mathbb N \times N$ we consider independent inhomogeneous Markov chains $(\vep_{ij}(N))_{N \in \mathbb N}$ and $(\delta_{ij}(N))_{N \in \mathbb N}$ on $(\Omega_{ij}, \Sigma_{ij}, \mathbb P)$ for $\Omega_{ij}= \{0,1\}^{\mathbb N}$ with transition probabilities
\begin{align}
&\mathbb P(\vep_{ij}(N)=0 |\vep_{ij}(N-1)=0 )=0,
\nonumber \\
&\mathbb P(\vep_{ij}(N)=1 |\vep_{ij}(N-1)=0 )=0,
\nonumber \\
&\mathbb P(\vep_{ij}(N)=0 |\vep_{ij}(N-1)=1 )=1-\frac{p_N}{p_{N-1}}, \label{prob_eps_1}
\\
&\mathbb P(\vep_{ij}(N)=1 |\vep_{ij}(N-1)=1 )=\frac{p_N}{p_{N-1}}, \label{prob_eps_2}
\end{align}
resp.
\begin{align}
&\mathbb P(\delta_{ij}(N)=0 |\delta_{ij}(N-1)=0 )=0,
\nonumber\\
&\mathbb P(\delta_{ij}(N)=1 |\delta_{ij}(N-1)=0 )=0,
\nonumber
\\
&\mathbb P(\delta_{ij}(N)=0 |\delta_{ij}(N-1)=1 )=1-\frac{q_N}{q_{N-1}},\label{prob_delta_1}
\\
&\mathbb P(\delta_{ij}(N)=1 |\delta_{ij}(N-1)=1 )=\frac{q_N}{q_{N-1}}.\label{prob_delta_2}
\end{align}
The probabilities in \eqref{prob_eps_1} and \eqref{prob_eps_2} resp.~\eqref{prob_delta_1} and \eqref{prob_delta_2} are chosen such that they imply \eqref{cond_eps} resp.~\eqref{cond_delta}.
We consider the product  probability space  $(\Omega, \Sigma)$ with $\Omega := \prod_{(i,j) \in \mathbb N \times \mathbb N} \Omega_{ij}, \Sigma := \prod_{(i,j) \in \mathbb N \times \mathbb N} \Sigma_{ij}$ and we set $\mathcal E:= \Omega \times \Omega$.  Then we consider $(\vep, \delta)=(\vep(N), \delta(N))$ as a family of random variables on $\{(\vep, \delta) \in \mathcal E: \vep_{ij} \in \{0,1\}^{\mathbb N} \text{ for all } i,j \in \{1, \ldots, N\}, i\sim j \text{and }  \delta_{ij} \in \{0,1\}^{\mathbb N} \text{ for all } i,j \in \{1, \ldots, N\}, i\nsim j  \}$.

To describe the decision making process, each agent has the choice from a discrete set of
alternatives, which in our situation will be the set $\{-1,+1\}$ as in \cite{foellmer74,brockdurlauf,schelling71}.
E.g., agents may have to choose between two different product brands or between two different political parties as in \cite{kirschpolitik}.

Now, we assume that for an agent $i$ the choice $\sigma_i$ maximizes a certain utility function $U_i:\{-1,+1\}\to \mathbb{R}$.
To interpolate between pure individual choice and pure peer pressure decisions,
we suppose that the utility
function $U_i$ has two components: an individual
part $I_i(\sigma_i)$, which only depends on $\sigma_i$, and a common piece $C_i$, which
also depends on the choices of all other individuals $\sigma_j , j\not=
i$, that communicate with agent $i$. We thus write
\begin{equation}\label{utility}
U_i(\sigma_i):= I_i(\sigma_i) + C_i(\sigma_i,\{\sigma_j , j\not= i \}).
\end{equation}
The choice of $\sigma_i\in \{-1,1\}$ implies that we can take $I_i$ to be a linear function, which we will write as
\begin{equation*}
I_i(\sigma_i)= u_i\sigma_i + h\sigma_i.
\end{equation*}
Here, $h$ is the same for all agents and  expresses the apriori tendency to vote ``yes" or ``no''.
We will exclusively consider the case $h\equiv 0$ for the same reasons as we will stick to equal block sizes and two groups $S$ and $S^c$, only: Otherwise even for the full model precise conditions for the existence of a phase transition are unknown.

To model that individual tastes are heterogeneous we take our utility functions random as in
\cite{hildenbrand}. More precisely, we assume that $(u_i)_{i=1 \ldots N}$ are
i.i.d.~random variables with common distribution function $F$. As often we will assume that $F$ has a {\it logit} distribution
(see e.g.~\cite{anderson}):
\begin{equation}\label{logit}
F(x)= \mathbb{P}(u_i\leq x):= \frac{1}{1+
\exp (-\beta{}x)}. 
\end{equation}
Here $\beta >0$ describes the homogeneity of the preferences of the agents:
Large values of $\beta$ describe a group in which the group members share similar
tastes.

In order to describe the second component of our utility function $U_i$ in \eqref{utility} we need to define a neighborhood $\Ns_i$ for each agent $i$.
These are the individuals that directly communicate with agent $i$ and hence have a direct influence on his or her utility function. Mathematically speaking
\begin{equation*}
\Ns_i:=\{ j \ | \quad \varepsilon_{ij}= 1 \mbox{  or   } \delta_{ij}=1\}.
\end{equation*}
We can partition $\Ns_i$ into those $j$ that belong to the same group as $i$
\begin{equation*}
\Ns^{\sim}_i:=\{ j \ | \quad \varepsilon_{ij}= 1 \}
\end{equation*}
and those that belong to a different group
\begin{equation*}
\Ns^{\not\sim}_i:=\{ j \ | \quad \delta_{ij}= 1 \}.
\end{equation*}
Note that since our communication structure is random, so are the sets $\Ns_i$, and in particular, they will be different for different agents $i$.
For the second part of our utility function $U_i$, which we called $C_i$, we will assume
an additive structure as in \cite{foellmer74,brockdurlauf}:
\begin{eqnarray*} 
C_i(\sigma_i,\{\sigma_j , j\in \Ns_i \}) &=& C^{\sim}_i(\sigma_i,\{\sigma_j , j\sim i \})+C^{\not\sim}_i(\sigma_i,\{\sigma_j , j\not\sim i \})
\nonumber \\
& =  &\frac{1}{pN} \sum_{j\in\Ns^{\sim}_i} \sigma_j \sigma_i + \frac{\alpha}{\beta pN} \sum_{j\in\Ns^{\not \sim}_i} \sigma_j \sigma_i.
\end{eqnarray*}
Note that we normalized the interaction term stemming from $C^{\sim}_i$ by the expected size of $\Ns^{\sim}_i$, which is $pN$ (upto a factor $\frac{1}{2}$). The second summand is also normalized by the same factor $pN$, hence reflecting that typically members of different groups have less influence on agent $i$'s choice than members of the same group (if $q_N < p_N$). This choice in particular ensures that in the limit $N \to \infty$ the social utility $C_i$
does not systematically dominate the individual utility $I_i$. Moreover, $\alpha$ is a real parameter with
$|\alpha | \le \beta$. We added this parameter to also allow for contrasting votes in the two groups. We divided the second summand by $\beta$ to obtain a nice form of the invariant measure, see the following section. Of course, one could imagine other parameterizations, e.g.~dividing the second summand by $qN$ instead of $pN$ or not dividing by $\beta$. This would lead to other critical values for the parameters, but, of course, it would not change the overall picture.

In approaching the equilibrium picture of the above model, we will now assume that each agent $i$
makes his or her choice $\sigma_i$ by maximizing his or her
utility function $U_i$ as given in (\ref{utility}). Observe that the second summand in $U_i$, i.e.~$C_i$, depends on the choices of all other agents in her neighborhood, $\sigma_j, j \not= i, j \in \Ns_i$.
To maximize $U_i$ we will therefore assume that all $\sigma_j, j \not= i, j \in \Ns_i$ are fixed and maximize $U_i$ conditionally on
$\sigma_j, j \not= i, j \in \Ns_i$.
Then, given $\sigma_j, j \not= i, j \in \Ns_i$, agent $i$ will decide for
$\sigma_i=+1$ if
\begin{equation*} I_i(+1) + C_i(+1,\{\sigma_j , j\in\Ns_i
\})
> I_i(-1) + C_i(-1,\{\sigma_j , j\in\Ns_i
\}).
\end{equation*}
This is obviously the case, if and only if
\begin{equation*}
I_i(+1) - I_i(-1) > C_i(-1,\{\sigma_j , j\in\Ns_i \}) -
C_i(+1,\{\sigma_j , j\in\Ns_i \}), \nonumber
\end{equation*}
which in turn is fulfilled, if and only if
\begin{equation*}
u_i >  -\frac{1}{pN}\sum_{j\in\Ns^{\sim}_i} \sigma_j -\frac{\alpha}{\beta pN}\sum_{j\in\Ns^{\not\sim}_i} \sigma_j,
\end{equation*}
where we set $h \equiv 0$.

The conditional probability of agent $i$ choosing $\sigma_i=1$, given all the other decisions $\sigma_j, j \not= i, j \in \Ns_i$, can
therefore be computed as:
\ba\label{conditional}
\mathbb{P}(\sigma_i=+1|(\sigma_j)_{j\in\Ns_i}) &=& 1-
\mathbb{P}(u_i\leq -\frac{1}{pN}\sum_{j\in\Ns^{\sim}_i} \sigma_j -\frac{\alpha}{\beta pN}\sum_{j\in\Ns^{\not\sim}_i} \sigma_j)\nonumber \\
&=& 1- F(-\frac{1}{pN}\sum_{j\in\Ns^{\sim}_i} \sigma_j -\frac{\alpha}{\beta pN}\sum_{j\in\Ns^{\not\sim}_i} \sigma_j).
\ea
If $F$ is the logit distribution given by (\ref{logit}), our condition (\ref{conditional})
turns into:
 \ba\label{logit.cond}
&&\mathbb{P}(\sigma_i=+1|\ (\sigma_j)_{j\in\Ns_i})  =
 \frac{\exp{\beta C_i(+1,\sigma_j)}}{
\exp({\beta C_i(+1,\sigma_j)}) + \exp({\beta
C_i(-1,\sigma_j)}) } \\
&=& \frac{ \exp[\frac{\beta}{2}(\frac{1}{pN}\sum_{j\in\Ns^{\sim}_i} \sigma_j +\frac{\alpha}{\beta pN}\sum_{j\in\Ns^{\not\sim}_i} \sigma_j)]}{\exp[\frac{\beta}{2}(\frac{1}{pN}\sum_{j\in\Ns^{\sim}_i} \sigma_j +\frac{\alpha}{\beta pN}\sum_{j\in\Ns^{\not\sim}_i} \sigma_j)] +
\exp[-\frac{\beta}{2}(\frac{1}{pN}\sum_{j\in\Ns^{\sim}_i} \sigma_j +\frac{\alpha}{\beta pN}\sum_{j\in\Ns^{\not\sim}_i} \sigma_j)]}\nonumber\\
&=& \frac{ \exp[\frac{\beta}{2pN}\sum_{j\in\Ns^{\sim}_i} \sigma_j +\frac{\alpha}{2 pN}\sum_{j\in\Ns^{\not\sim}_i} \sigma_j]}{\exp[\frac{\beta}{2pN}\sum_{j\in\Ns^{\sim}_i} \sigma_j +\frac{\alpha}{2 pN}\sum_{j\in\Ns^{\not\sim}_i} \sigma_j] +
\exp[-\frac{\beta}{2pN}\sum_{j\in\Ns^{\sim}_i} \sigma_j -\frac{\alpha}{2pN}\sum_{j\in\Ns^{\not\sim}_i} \sigma_j]},\nonumber
\ea
which is the form of a Glauber dynamics in statistical physics.

Note that still the decisions of the agents are taken randomly, which can be interpreted as a heterogeneity in the decision taking within our population of the two groups. However, it can also be understood as a randomness that is inherent in the agent's choice, because they are not acting completely rational.

Let us briefly discuss some particular choices of the parameters in this model:
When $C_i$ is solely a function of $\sigma_i$ the model reduces to the standard
logit model. This is well known from the literature on discrete choice models, see e.g.~\cite{anderson}. In contrast, if $C_i$ indeed depends on $\sigma_j, j \in \Ns_i$ the above equations
(\ref{conditional}) and (\ref{logit.cond}) symbolize the
influence of the social environment on an agent's decision via the conditional
distribution $\mathbb{P}(\sigma_i|\sigma_j, j~\in\Ns_i)$.

As for the influence parameter $\beta$, for a very large value of $\beta$ (i.e.~the case $\beta\to\infty$)
the model represents the classical utility maximizer. However, in this case the utility is solely determined by the social utility function
and completely ignores individual tastes. Therefore the model could be expected to show very similar decisions for the
various individuals. This is to be contrasted with the case $\beta=0$. Here agents will choose any of the two possible decisions with equal probabilities. This is reflected in
a very heterogeneous picture of the decisions of the agents.

In order to interpolate  between
these two extremes, we study positive, but finite values of $\beta$ and we are particularly interested in how the behavior of the agents
changes for different values of $\beta\in (0,\infty)$.

Moreover, as we will see in our analysis the product of $\alpha$ and the limit of the ratio $q_N/p_N$ determines the mutual influence of the two groups.

\section{The invariant measure}
So far, we described how a fixed agent $i$ takes his or her decision.
A natural follow-up question is, when the system is in equilibrium. Because of the interdependence of the individual decisions this question is non-trivial.

Many approaches deal with a notion of equilibrium that is defined by self-consistency of the
actions or beliefs \cite{brockdurlauf,GS00,horstscheinkman}.
This notion of a static equilibrium defines
a configuration of decisions
$(\sigma_i^*)_{i=1 \ldots N}$ to be in a (static) equilibrium if, for each $i$, $\sigma_i^*$ is the best response
to the decisions of the other agents $\sigma_j^*$. That is
\begin{equation*}  \mbox{For all }
i=1 \ldots N,\quad \sigma_i^* \in \mathop{\arg\max}_{\sigma_i\in {-1,+1}}
U_i(\sigma_i,\{\sigma^*_j,j\neq i\}).
\end{equation*}

Note, however, that such a definition
of a ``self-consistent equilibrium'' is also static in the sense that it gives no clue as to how such an equilibrium point can be reached. Indeed, the mere existence of an equilibrium point does not imply that
it can be reached from some given starting configuration (which is not in equilibrium).

We will therefore take a dynamic approach and endow all agents with a Poisson clock. When agent $i$'s alarm goes off at time $t$, she will take a new decision and update her opinion $\sigma_i$ according to \eqref{logit.cond}. Here the $\sigma_j, j \in \Ns_i$ are the decisions of the other agents at time $t$. In this way we can construct a continuous time Markov chain. This chain is ergodic and by detailed balance it is immediately checked that its invariant measure is given by the measure $\mu$ on the state space $\Sigma_N:=\{-1,+1\}^N$:
\begin{equation}\label{gibbs2}
\mu(\sigma):=\mu_{N,\alpha,\beta,\vep,\delta,S}(\sigma):= \frac{e^{- H_{N,\alpha, \beta, \vep, \delta,S}(\sigma)}}{\sum_{\sigma'}e^{-  H_{N,\alpha, \beta, \vep, \delta,S}(\sigma')}}=:
\frac{e^{- H_{N,\alpha, \beta, \vep, \delta,S}(\sigma)}}{Z_{N,\alpha,\beta, \vep,\delta,S}}.
\end{equation}
Here the Hamiltonian of the Gibbs measure is defined by the following function on
$\Sigma_N:=\{-1,+1\}^N$:
\begin{equation*}
H(\sigma):=H_{N,\alpha, \beta,\vep, \delta,S}(\sigma):= -\frac \beta {2Np} \sum_{i \sim j} \vep_{ij} \sigma_i \sigma_j -\frac \alpha {2Np} \sum_{i \not\sim j} \delta_{ij} \sigma_i \sigma_j, \quad \sigma \in \Sigma_N.
\end{equation*}
Moreover, let us quickly repeat our conditions on the parameters: We chose $\beta>0$, the set $S \subset \{1, \ldots, N\}$ has cardinality $\frac N2$, and $\vep:=\vep_N:=(\vep_{ij})_{{i,j} \subset S \text{ or } {i,j} \subset S^c}$ and $\delta:=\delta_N:=(\delta_{ij})_{(i,j) \in S \times S^c \text{ or } (i,j) \in S^c \times S }$ are independent Bernoulli random variables.
Moreover the $\vep$-variables and the $\delta$-variables are independent from each other. Their distribution is given by
\begin{equation}\label{cond_eps}
\P(\vep_{ij} =1)=1-\P(\vep_{ij} =0)=p=p_N
\end{equation}
as well as
\begin{equation}\label{cond_delta}
\P(\delta_{ij} =1)=1-\P(\delta_{ij} =0)=q=q_N
\end{equation}
and we will assume that $p \ge q$ and that $|\alpha|\frac{q}{p}\le \beta$, to model that the influence within a group is stronger than across two groups. Recall the construction of the underlying probability space for the $\vep$- and $\delta$-variables in the previous section. 

Moreover, the model described above implies that we will consider the so-called quenched situation, where the realisations of $\vep$ and $\delta$ are tossed in advance and then fixed for the rest of the considerations. Note that this constitutes a Ising block model on a {\it directed } random graph. As mentioned above this is basically a way to make the computations slightly more convenient. The undirected graph case can also be treated.
Finally we will take the liberty and omit indices, whenever we think that this is reasonable.

The goal will be to analyze this model (which is in the spirit of \cite{LV10} e.g.) for $p_N$ and $q_N$ large enough, to describe its statistical mechanics.

The first theorem we will prove is the following:
\begin{theorem}\label{theo_stat_mech}
Assume that
\begin{equation*}{p_N N} \to \infty \qquad \mbox{and that }\quad \frac{q_N}{p_N} \to a \in [0,1] \quad \mbox{as $N \to \infty$.}
\end{equation*}
Then, there  are subsets
\begin{equation*}
\mathcal{E}_N^*\subseteq \mathcal{E}_N := \left\{(\vep, \delta):\vep \in \{0,1\}^{(S \times S) \cup (S^c \times S^c)}, \delta \in \{0,1\}^{S \times S^c}\right\}
\end{equation*}
with
\begin{equation*}
\P(\mathcal{E}^*)=1\quad \text{for } \quad \mathcal{E}^*:= \liminf_{N \to \infty} \mathcal{E}^*_N =\bigcup_{M=1}^\infty  \bigcap_{N=M}^\infty \mathcal{E}^*_N,
\end{equation*}
such that for all sequences $(\vep,\delta)\in \mathcal{E}^*$ the vector of group opinions $m=(m_1, m_2)$ satisfies:
\begin{itemize}
\item If $\beta+ |\alpha a| \le 2$ the distribution of $m$ under the Gibbs measure $\mu_{N}$, i.e.~$\mu_{N} \circ m^{-1}$ converges weakly to the Dirac measure in $(0,0)$.
\item If $\beta >2$ and $\alpha a=0$, then $\mu_{N,\beta} \circ m^{-1}$ converges weakly to the mixture of four Dirac measures
$\frac 1 4 \sum_{v_1, v_2 \in \{\pm\}} \delta_{(v_1 z^*(\frac \beta 2), v_2 z^*(\frac \beta 2))}$.
Here $z^*(b)$ denotes the largest solution of the Curie-Weiss equation
\begin{equation*}
z= \tanh(b z)
\end{equation*}
(which is positive, if $b>1$).
\item If $\beta+\alpha a > 2$, $\alpha a>0$ and $a \neq 0$, the distribution of $m$ under the Gibbs measure $\mu_{N,\beta}$ converges weakly to the following mixture of two Dirac measures
$$\frac 1 2 \sum_{v \in \{\pm\}} \delta_{(v z^*(\frac {\beta+\alpha a} 2), v z^*(\frac{\beta+\alpha a} 2))}.$$
\item If $\beta+ \alpha a > 2$, $\alpha a <0$ and $a \neq 0$, the distribution of $m$ under the Gibbs measure $\mu_{N,\beta}$ converges weakly to the following mixture of two Dirac measures
$$\frac 1 2 \sum_{v \in \{\pm\}} \delta_{(v z^*(\frac {\beta+\alpha a} 2), -v z^*(\frac{\beta+\alpha a} 2))}.$$
\end{itemize}
\end{theorem}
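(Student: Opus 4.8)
\emph{Overall strategy and the key estimate.} The plan is to show that, on an event of full probability in the graph randomness, the random Hamiltonian coincides with the fully connected block Hamiltonian with cross--interaction $\alpha a$ up to a uniform error of order $o(N)$, and then to run the classical Laplace (large deviation) argument for mean--field Gibbs measures. Set $V(x):=\tfrac18(\beta x_1^2+\beta x_2^2+2\alpha a\,x_1x_2)$, so that this limiting Hamiltonian is $-N\,V(m(\sigma))$. Using $\sum_{i\sim j}\sigma_i\sigma_j=\tfrac{N^2}{4}(m_1^2+m_2^2)$ and $\sum_{i\not\sim j}\sigma_i\sigma_j=\tfrac{N^2}{2}m_1m_2$ one has, for every $\sigma\in\Sigma_N$, the exact identity
\begin{equation*}
H_N(\sigma)+N\,V(m(\sigma))=-\frac{\beta}{2Np}\sum_{i\sim j}(\vep_{ij}-p)\sigma_i\sigma_j-\frac{\alpha}{2Np}\sum_{i\not\sim j}(\delta_{ij}-ap)\sigma_i\sigma_j,
\end{equation*}
so everything rests on making the right--hand side uniformly small. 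To this end I would fix $\eta_N:=(p_N N)^{-1/4}\to0$ and let $\mathcal E_N^*$ be the event that $\bigl|\sum_{i\sim j}(\vep_{ij}-p)\sigma_i\sigma_j\bigr|\le\eta_N N^2 p$ and $\bigl|\sum_{i\not\sim j}(\delta_{ij}-q)\sigma_i\sigma_j\bigr|\le\eta_N N^2 p$ hold for \emph{all} $\sigma\in\Sigma_N$ simultaneously. For a fixed $\sigma$ each sum consists of $O(N^2)$ independent, bounded, centred terms with variances $\le p$ (resp.\ $\le q\le p$), so Bernstein's inequality bounds its failure probability by $2\exp(-c\,\eta_N^2 N^2 p)=2\exp(-c\,(p_N N)^{1/2}N)$; since $p_N N\to\infty$ this beats $N\log 2$, so a union bound over the $2^N$ configurations is summable and Borel--Cantelli yields $\P(\mathcal E^*)=1$ for $\mathcal E^*=\liminf_N\mathcal E_N^*$. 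On $\mathcal E^*$, using in addition $q/p\to a$ (so $|q-ap|=o(p)$, which absorbs the $\delta$--term, and is trivial when $a=0$), the identity gives $\sup_{\sigma\in\Sigma_N}\bigl|\tfrac1N H_N(\sigma)+V(m(\sigma))\bigr|\to0$.

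\emph{Reduction to a deterministic Laplace principle.} Fix from now on a realisation in $\mathcal E^*$. Since $\#\{\sigma\in\Sigma_N:m(\sigma)=x\}=\exp\bigl(N\log 2-\tfrac{N}{2}(I(x_1)+I(x_2))+O(\log N)\bigr)$ with $I(t)=\tfrac{1+t}{2}\log(1+t)+\tfrac{1-t}{2}\log(1-t)$, the previous step yields, uniformly over the range of $m$,
\begin{equation*}
-\tfrac1N\log\bigl((\mu_N\circ m^{-1})(\{x\})\bigr)\longrightarrow\Phi(x)-\min_{[-1,1]^2}\Phi,\qquad \Phi(x):=\tfrac12\bigl(I(x_1)+I(x_2)\bigr)-V(x);
\end{equation*}
equivalently $\mu_N\circ m^{-1}$ satisfies a large deviation principle at speed $N$ with good rate function $\Phi-\min\Phi$, so it concentrates weakly on $\mathcal M:=\argmin\Phi$, the uniform $o(N)$ error being harmless because the competing rates are strictly positive constants. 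Crucially, $\Phi$ is exactly the rate function of the \emph{fully connected} block model with parameters $(\alpha a,\beta)$, so the remaining analysis is purely deterministic and independent of the graph.

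\emph{Minimisers and weights.} It remains to identify $\mathcal M$ and the limiting weights. In the coordinates $u=(x_1+x_2)/2$, $v=(x_1-x_2)/2$ one has $V=\tfrac14\bigl((\beta+\alpha a)u^2+(\beta-\alpha a)v^2\bigr)$, while $\tfrac12(I(x_1)+I(x_2))=\tfrac12(I(u+v)+I(u-v))\ge\tfrac12(u^2+v^2)$ with equality only at the origin; hence $\beta+|\alpha a|\le2$ forces $\mathcal M=\{(0,0)\}$, the first bullet. Otherwise the stationarity equations read $x_i=\tanh(\tfrac{\beta}{2}x_i+\tfrac{\alpha a}{2}x_{3-i})$, and the classification of global minimisers --- the known analysis of the fully connected block model, cf.\ \cite{BRS17_blockmodel,LSblock1} --- shows they lie on $\{x_1=x_2\}$ or $\{x_1=-x_2\}$, where $\Phi$ collapses to the rescaled Curie--Weiss functional $I(z)-\tfrac14(\beta\pm\alpha a)z^2$; comparing the two wells (the hypothesis $a\ne0$ breaks the symmetric/antisymmetric tie) selects the symmetric pair when $\alpha a>0$, the antisymmetric pair when $\alpha a<0$, and all four sign patterns when $\alpha a=0$, the common magnitude being the largest solution $z^*$ of the corresponding equation $z=\tanh(bz)$. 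The limiting weights then come from the symmetries of $H_N$: the exact flip $\sigma\mapsto-\sigma$ makes $\mu_N\circ m^{-1}$ invariant under $x\mapsto-x$, so each $\pm$--pair carries equal mass, and when $\alpha a=0$ the two blocks decouple asymptotically (exactly so for $\alpha=0$, via flipping the spins of a single block, otherwise up to the vanishing cross term), spreading the mass evenly over the four points.

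\emph{Main obstacle.} I expect the crux to be the graph--concentration step: one must control the two bilinear forms \emph{simultaneously over all $2^N$ sign patterns} using only the critical sparsity $p_N N\to\infty$. The decisive point is that the relevant variances are of order $p$ rather than $1$, so Bernstein rather than Hoeffding is needed --- with Hoeffding the union bound would only close for $p_N\gg N^{-1/2}$. A secondary technical nuisance is the equal--weight claim in the degenerate four--point regime ($\alpha a=0$ with $\alpha\ne0$), where the block--swap symmetry is only approximate and a quantitative estimate on the residual cross term is required.
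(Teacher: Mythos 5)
Your proposal is correct in its architecture and follows the same two-step strategy as the paper: establish, on a full-measure event in the graph randomness, a uniform $o(N)$ comparison between the random Hamiltonian and the fully connected block Hamiltonian with coupling $\alpha a$, and then transfer the known Laplace/large-deviation analysis of the latter. Where you differ is in how the comparison event is built. The paper controls the number of open edges inside the aligned/unaligned pair sets $L^+_b$, $L^{\pm}_{nb}$ by Chernoff bounds with the relative entropies $I_p$, $I_q$, and has to split according to the sign of $m_1m_2$ because $|L^+_{nb}|$ or $|L^-_{nb}|$ can be of order smaller than $N^2$ (so a relative-error bound on the smaller one is useless); your centering of the bilinear forms at their means, with deviations measured against $N^2p$ and Bernstein supplying per-term variances of order $p$, avoids this case distinction entirely and closes the union bound over all $2^N$ configurations under exactly the same hypothesis $p_NN\to\infty$. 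Your remark that Hoeffding would not suffice is the same point the paper makes implicitly by Taylor-expanding $I_p(p_N(1\pm\gamma_N))\ge p_N\gamma_N^2/3$. You are also more explicit than the paper on the subcritical regime (the $(u,v)$ decoupling giving uniqueness of the minimiser when $\beta+|\alpha a|\le2$) and, importantly, on the mixture weights: the paper only argues that the limit points are the zeros of the rate function, which the LDP gives, but an LDP at speed $N$ does not determine the weights of the limiting mixture; your appeal to the exact symmetries $\sigma\mapsto-\sigma$ (always) and the single-block flip (when $\alpha=0$) is the right way to pin them down.

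The one genuine gap is the one you flag yourself: the equal-weight claim for the four-point mixture when $\alpha a=0$ but $\alpha\ne0$. There neither the LDP nor any exact symmetry decides between the symmetric wells $(\pm z^*,\pm z^*)$ and the antisymmetric ones $(\pm z^*,\mp z^*)$, and the residual cross term $\frac{\alpha}{2Np}\sum_{i\not\sim j}\delta_{ij}\sigma_i\sigma_j\approx\frac{\alpha q_N}{4p_N}Nm_1m_2$ is $o(N)$ but need not be $O(1)$: if $Nq_N/p_N\to\infty$ (and $q_N$ is large enough that this term dominates the concentration error $\eta_N N$), it tips the balance exponentially towards the wells with $\sgn(m_1m_2)=\sgn(\alpha)$, so the limit would be a two-point mixture and the four-point statement would fail. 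This is not a defect of your method relative to the paper's --- the paper's proof has exactly the same blind spot, since its bound on $|\overline H_N|$ degenerates to an uninformative $o(N)$ there --- but it means the second bullet, as stated, requires either $\alpha=0$ or an additional assumption such as $Nq_N/p_N=O(1)$, or a sharper comparison at the scale $Nq_N/p_N$. Everything else in your outline is sound.
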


Theorem \ref{theo_stat_mech} tells us that there is a phase transition for the vector of group opinions. If both, $\beta$ and $|a \alpha|$ are small enough, i.e. if $\beta+|\alpha a| \le 2$, then on a set with huge probability for both groups the average group opinion will behave as if decisions were taken independently with probability $1/2$ for $+1$ and $-1$ (however, the fluctuations are different). If $\beta >2 $ and $\alpha a=0$, there are four different limit points for the group opinions. This is reasonable because each group behaves similar to a Curie-Weiss model at low temperature (where there are two limit points of the magnetizations) and $\alpha a=0$ indicates that the group opinions are asymptotically independent. If $\beta >2 $ and $|\alpha a| >0 $, there
are two possible non-zero limit points for the vector of group opinions and the decisions of the two groups are positively correlated, if $\alpha a >0 $ and negatively correlated, when $\alpha a < 0 $.

We will prove Theorem \ref{theo_stat_mech} in the next section.
We will also mention a consequence of our proof that allows to derive the free energy of our model.

\section{Proof of Theorem \ref{theo_stat_mech}}\label{proof}
In this section we will prove Theorem \ref{theo_stat_mech}. Its proof basically relies on the law of large numbers for the coupling variables $\vep$ and $\delta$. More precisely, we consider subsets of $\mathcal{E}_N$,  for which there are large subsets of the vertices in which there are much more or much less edges than expected. These sets have exponentially small probabilities. A similar argument was made in \cite{BG93b}.

For a fixed configuration $\sigma \in \{\pm1\}^N$ let us introduce the sets of sites aligned and unaligned spins, both within the same  block (indicated by the subscript `b' in the notation below) and in different blocks (indicated by the subscript `nb' for `not the same block' below). These are denoted by
\begin{equation*}
L^{\pm}_b(\sigma):= \{i\sim j: \sigma_i \sigma_j=\pm 1\}
\end{equation*}
as well as
\begin{equation*}
L^{\pm}_{nb} (\sigma):= \{i\not\sim j: \sigma_i \sigma_j=\pm 1\}.
\end{equation*}
Then we are able to express $\tilde H_{N,\alpha, \beta}(\sigma)$ in terms of $L^+_b$ and
$L^+_{nb}$, only. Indeed: Note that
\begin{eqnarray*}
m_1^2+m_2^2 &=& \frac 4 {N^2} \left(\sum_{i,j \in S} \sigma_i \sigma_j +\sum_{i,j \notin S} \sigma_i \sigma_j \right) =  \frac 4 {N^2}(|L^+_b|-|L^-_b|)\\
&=& \frac 4 {N^2}\left(|L^+_b|-\left(2 \frac {N^2}{ 4}-|L^+_b|\right)\right) = \frac 8 {N^2} |L^+_b|-2.
\end{eqnarray*}
Hence
\begin{equation}\label{L1}
|L^+_b|=\frac{N^2}8 (m_1^2+m_2^2+2).
\end{equation}

Similarly,
\begin{eqnarray*}
m_1\,m_2 &=& \frac 4 {N^2} \left(\sum_{i\in S}\sum_{j \in S^c} \sigma_i \sigma_j \right) =  \frac 2 {N^2}(|L^+_{nb}|-|L^-_{nb}|)\\
&=& \frac 2 {N^2}\left(|L^+_{nb}|-\frac {N^2}{ 2}+|L^+_{nb}|\right) = \frac 4 {N^2} |L^+_{nb}|-1.
\end{eqnarray*}
This gives
\begin{equation*}
|L^+_{nb}|=\frac{N^2}4 (m_1m_2+1).
\end{equation*}
Analogously,
\begin{eqnarray*}
m_1\,m_2 &=&  \frac 2 {N^2}(|L^+_{nb}|-|L^-_{nb}|)\\
&=& \frac 2 {N^2}\left(\frac {N^2}{ 2}-2|L^+_{nb}|\right) =1- \frac 4 {N^2} |L^-_{nb}|.
\end{eqnarray*}
Thus
\begin{equation}\label{L3}
|L^-_{nb}|=\frac{N^2}4 (1-m_1m_2).
\end{equation}

On the other hand, making use of
\begin{eqnarray*}
\sum_{i \sim j} \vep_{ij} \sigma_i \sigma_j&= & \sum_{i \sim j} \vep_{ij} \sigma_i \sigma_j \mathbbm{1}_{i,j \in L^+_b}+\sum_{i \sim j} \vep_{ij} \sigma_i \sigma_j \mathbbm{1}_{i,j \in L^-_b}\\
&=& \sum_{i \sim j} \vep_{ij} \mathbbm{1}_{i,j \in L^+_b}-\sum_{i \sim j} \vep_{ij} (1-\mathbbm{1}_{i,j \in L^+_b})\\
&=&  \sum_{i \sim j} \vep_{ij} (2\mathbbm{1}_{i,j \in L^+_b}-1)\\
&=&  \sum_{i \sim j} \vep_{ij} 2\mathbbm{1}_{i,j \in L^+_b}- \sum_{i \sim j} \vep_{ij}
\end{eqnarray*}
and a similar observation for $\sum_{i \not\sim j} \delta_{ij} \sigma_i \sigma_j$
we can rewrite the Hamiltonian in terms of $L^+_b$ and $L^+_{nb}$:
\begin{eqnarray}\label{rewrite}
H(\sigma)&=& -\frac \beta {2Np} \sum_{i \sim j} \vep_{ij} \sigma_i \sigma_j -\frac \alpha {2Np} \sum_{i \not\sim j} \delta_{ij} \sigma_i \sigma_j  \\
&=& -\frac 1 {2Np} \left(\sum_{i \sim j} 2\beta\vep_{ij} \mathbbm{1}_{i,j \in L^+_b}- \sum_{i \sim j} \beta \vep_{ij}+
\sum_{i \not\sim j} 2\alpha \delta_{ij} \mathbbm{1}_{i,j \in L^+_{nb}}- \sum_{i\not \sim j} \alpha \delta_{ij}\right).\nonumber
\end{eqnarray}
In the same way, we can find an expression for $H(\sigma)$ that uses $L^-_{nb}$ instead of $L^+_{nb}$:
\begin{eqnarray}\label{rewrite2}
H(\sigma)
= -\frac 1 {2Np} \left(\sum_{i \sim j} 2\beta\vep_{ij} \mathbbm{1}_{i,j \in L^+_b}- \sum_{i \sim j} \beta \vep_{ij}+\sum_{i \not\sim j} \alpha \delta_{ij}-
\sum_{i \not\sim j} 2\alpha \delta_{ij} \mathbbm{1}_{i,j \in L^-_{nb}}\right).
\end{eqnarray}

We will thus show that for a subset of $\mathcal{E}_N$ with huge probability the sizes of the sets $L^+_b$ and $L^+_{nb}$ or $L^+_b$ and $L^-_{nb}$ are of the order we would expect them to be.
The point, why we just need $L^+_b$, but both, $L^+_{nb}$ and $L^-_{nb}$, is that $L^+_b$ is automatically of order $N^2$, while $L^+_{nb}$ or $L^-_{nb}$ may be of much smaller order, but they cannot both be small.

More precisely, for two sequences $\gamma_N >0$ and $\kappa_N >0$ that we will specify in the following proposition we define
\begin{eqnarray*}
\mathcal{E}^*_N &:=& \mathcal{E}^*_{b,N} \cap \mathcal{E}^*_{nb^+,N}\cap \mathcal{E}^*_{nb^-,N}\\
&=& (\mathcal{E}^*_{b,N} \cap \mathcal{E}^*_{nb^+,N})\cap (\mathcal{E}^*_{b,N} \cap \mathcal{E}^*_{nb^-,N})
\end{eqnarray*}
Here
\begin{eqnarray*}
\mathcal{E}^*_{b,N}:=
\left\{(\vep, \delta)\in \mathcal{E}: \forall \sigma \in\{\pm 1\}^N:
\left|\sum_{i \sim j} \vep_{ij} \mathbbm{1}_{i,j \in L^+_b}-p_N |L^+_b| \, \right| \le \gamma_N p_N |L^+_b| \right\},
\end{eqnarray*}
\begin{eqnarray*}
&&\hskip-.5cm \mathcal{E}^*_{nb^+,N}:=\\
&&\hskip-.5cm \left\{ (\vep, \delta)\in \mathcal{E}: \forall \sigma \in\{\pm 1\}^N \mbox{with }m_1(\sigma)m_2(\sigma) \ge0: \left|\sum_{i \sim j} \delta_{ij} \mathbbm{1}_{i,j \in L^+_{nb}}-q_N |L^+_{nb}| \, \right| \le \kappa_N q_N |L^+_{nb}| \right\},
\end{eqnarray*}
as well as
\begin{eqnarray*}
&&\hskip-.5cm \mathcal{E}^*_{nb^-,N}:=\\
&&\hskip-.5cm \left\{ (\vep, \delta)\in \mathcal{E}: \forall \sigma \in\{\pm 1\}^N \mbox{with }m_1(\sigma)m_2(\sigma) < 0: \left|\sum_{i \sim j} \delta_{ij} \mathbbm{1}_{i,j \in L^-_{nb}}-q_N |L^-_{nb}| \, \right| \le \kappa_N q_N |L^-_{nb}| \right\}.
\end{eqnarray*}
Finally set
\begin{equation*}
\mathcal{E}^{*,+}_N:=\mathcal{E}^*_{b,N} \cap \mathcal{E}^*_{nb^+,N} \qquad \mbox{and  }\quad
\mathcal{E}^{*,-}_N:=\mathcal{E}^*_{b,N} \cap \mathcal{E}^*_{nb^-,N}.
\end{equation*}
The desired set $\mathcal{E}^*$ is now given by the $\liminf$ of the sets $\mathcal{E}^*_N$:
$$
\mathcal{E}^*:= \liminf_{N \to \infty}\mathcal{E}^*_N = \bigcup_{M=1}^\infty  \bigcap_{N=M}^\infty \mathcal{E}^*_N.
$$
We now show that $\mathcal{E}^*$ has full probability:
\begin{proposition}\label{prop_full_measure}
If $p_N$ and $q_N$ satisfy the assumptions of Theorem \ref{theo_stat_mech} and $\gamma_N \ge \frac c{\sqrt{p_N N}}$ and $\kappa_N \ge \frac d {\sqrt{q_N N}}$ for some $c,d>0$ to be chosen later and $\gamma_N$ as well as $\kappa_N$ tend to 0, then
$$
\P(\mathcal{E}^*)=1.
$$
\end{proposition}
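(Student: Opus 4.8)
The plan is to deduce $\P(\mathcal{E}^*)=1$ from an exponential (Bernstein/Chernoff-type) concentration inequality for sums of independent Bernoulli variables, a union bound over the $2^N$ spin configurations, and the first Borel--Cantelli lemma, in the spirit of the argument from \cite{BG93b} mentioned at the start of this section. Since $\mathcal{E}^* = \liminf_N \mathcal{E}^*_N = \left(\limsup_N (\mathcal{E}^*_N)^c\right)^c$, it suffices to show $\sum_N \P\left((\mathcal{E}^*_N)^c\right) < \infty$; Borel--Cantelli then gives $\P\left(\limsup_N(\mathcal{E}^*_N)^c\right)=0$, i.e.\ $\P(\mathcal{E}^*)=1$. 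By $\mathcal{E}^*_N = \mathcal{E}^*_{b,N} \cap \mathcal{E}^*_{nb^+,N} \cap \mathcal{E}^*_{nb^-,N}$ and subadditivity, it is enough to bound each of $\P\left((\mathcal{E}^*_{b,N})^c\right)$, $\P\left((\mathcal{E}^*_{nb^+,N})^c\right)$ and $\P\left((\mathcal{E}^*_{nb^-,N})^c\right)$ by a summable sequence.

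I would begin with $\mathcal{E}^*_{b,N}$. For a \emph{fixed} $\sigma \in \{\pm1\}^N$ the index set $L^+_b(\sigma)$ is deterministic, so $\sum_{i\sim j}\vep_{ij}\mathbbm{1}_{i,j\in L^+_b}$ is a sum of $|L^+_b(\sigma)|$ independent Bernoulli$(p_N)$ variables with mean $p_N|L^+_b|$, and a Bernstein/Chernoff bound gives
\begin{equation*}
\P\left(\left|\sum_{i\sim j}\vep_{ij}\mathbbm{1}_{i,j\in L^+_b} - p_N|L^+_b|\right| > \gamma_N p_N|L^+_b|\right) \le 2\exp\left(-c_0\,\gamma_N^2\,p_N\,|L^+_b|\right)
\end{equation*}
for an absolute constant $c_0>0$ (valid since $\gamma_N\to 0$, hence $\gamma_N\le 1$ for $N$ large; the finitely many small $N$ are irrelevant for summability). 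The crucial point, already flagged after \eqref{L1}, is that $|L^+_b| = \frac{N^2}{8}(m_1^2+m_2^2+2) \ge \frac{N^2}{4}$ for \emph{every} configuration; combined with $\gamma_N^2 \ge c^2/(p_N N)$ this makes the exponent at least $\frac{c_0c^2}{4}N$, \emph{uniformly} in $\sigma$. A union bound over the at most $2^N$ configurations then gives $\P\left((\mathcal{E}^*_{b,N})^c\right) \le 2^{N+1}e^{-c_0c^2N/4}$, which is summable once $c$ is chosen so large that $c_0c^2/4 > \log 2$.

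The sets $\mathcal{E}^*_{nb^+,N}$ and $\mathcal{E}^*_{nb^-,N}$ are handled identically with $(\vep,p_N,\gamma_N)$ replaced by $(\delta,q_N,\kappa_N)$: for $\mathcal{E}^*_{nb^+,N}$ one quantifies only over $\sigma$ with $m_1m_2\ge 0$, for which $|L^+_{nb}| = \frac{N^2}{4}(m_1m_2+1) \ge \frac{N^2}{4}$, and for $\mathcal{E}^*_{nb^-,N}$ only over $\sigma$ with $m_1m_2<0$, for which $|L^-_{nb}| = \frac{N^2}{4}(1-m_1m_2) > \frac{N^2}{4}$ by \eqref{L3}; using $\kappa_N^2 \ge d^2/(q_N N)$, the Bernstein exponent is again at least of order $N$, and for $d$ large enough the union bound yields summable bounds for $\P\left((\mathcal{E}^*_{nb^\pm,N})^c\right)$. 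Summing the three estimates over $N$ completes the proof. The only genuine obstacle is the competition between the entropy factor $2^N$ from the union bound and the large-deviation cost of a single configuration; it is won precisely because the relevant index sets have size of order $N^2$, not $N$ — which is exactly why one needs $L^+_b$ but \emph{both} of $L^+_{nb}$ and $L^-_{nb}$: the first is automatically of order $N^2$, whereas $L^+_{nb}$ and $L^-_{nb}$ may individually be small but never both at once — so that the exponents scale like $\gamma_N^2 p_N N^2 \asymp N$ and $\kappa_N^2 q_N N^2 \asymp N$ under the stated lower bounds on $\gamma_N,\kappa_N$, and $c,d$ can be tuned to beat $\log 2$. (This uses implicitly that $q_N N \to \infty$, needed for $\kappa_N \ge d/\sqrt{q_N N}$ to be compatible with $\kappa_N\to 0$; it is automatic when the limit $a$ of $q_N/p_N$ is positive, since then $q_N N \sim a\,p_N N$.)
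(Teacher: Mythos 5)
Your proposal is correct and follows essentially the same route as the paper: per-configuration exponential concentration for the Bernoulli sums, a union bound over configurations, and the first Borel--Cantelli lemma, with the decisive input being that $|L^+_b|\ge N^2/4$ always and $|L^+_{nb}|\ge N^2/4$ resp.\ $|L^-_{nb}|\ge N^2/4$ on the respective half-spaces $m_1m_2\ge 0$ and $m_1m_2<0$, so that $\gamma_N^2 p_N N^2$ and $\kappa_N^2 q_N N^2$ are of order $N$ and beat the entropy $N\log 2$ for $c,d$ large. The only difference is bookkeeping: the paper derives the tail bound from the explicit relative entropies $I_p,I_q$ via exponential Chebyshev, Taylor expansion, and a Stirling estimate after grouping configurations by their magnetization vector, whereas you invoke a packaged multiplicative Chernoff bound and the crude $2^N$ count, which yields the same condition ($c^2\gtrsim 12\log 2$ with $c_0=1/3$) and is equally valid.
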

Not unexpectedly Proposition \ref{prop_full_measure} will follow from an estimate for the probabilities of $\mathcal{E}_N^*$ and the Borel-Cantelli-Lemma. The needed estimate for the proof of Proposition \ref{prop_full_measure} is provided in the following lemma.
\begin{lemma}
Under the assumptions of Proposition \ref{prop_full_measure} we have that there exist two constants $C_1, C_2>0$ such that for all $N$ large enough
$$
\P((\mathcal{E}_N^*)^c)\le 4 C_1  N^2 \exp(- C_2 N).
$$
\end{lemma}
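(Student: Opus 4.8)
The plan is to bound $\P((\mathcal E_N^*)^c)$ by a union bound over the three sets whose intersection is $\mathcal E_N^*$ and, for each of these sets, by a union bound over all configurations $\sigma$ combined with a multiplicative Chernoff estimate for binomial random variables. Since $\mathcal E_N^*=\mathcal E^*_{b,N}\cap\mathcal E^*_{nb^+,N}\cap\mathcal E^*_{nb^-,N}$, it suffices to show that each of $\P((\mathcal E^*_{b,N})^c)$, $\P((\mathcal E^*_{nb^+,N})^c)$ and $\P((\mathcal E^*_{nb^-,N})^c)$ decays like $\exp(-cN)$ for a suitable constant $c>0$, and then to absorb everything into $4C_1N^2\exp(-C_2N)$.

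First I would treat $\mathcal E^*_{b,N}$. For a \emph{fixed} $\sigma\in\{\pm1\}^N$ the set $L^+_b=L^+_b(\sigma)$ is deterministic, and since the $\vep_{ij}$ are i.i.d.\ Bernoulli$(p_N)$ the sum $\sum_{i\sim j}\vep_{ij}\mathbbm{1}_{i,j\in L^+_b}$ is Binomial$\big(|L^+_b|,p_N\big)$. By \eqref{L1} we have $|L^+_b|\ge N^2/4$ for every $\sigma$, so (using $\gamma_N\le1$ for $N$ large, which is legitimate since $\gamma_N\to0$) the two-sided multiplicative Chernoff bound gives, for this $\sigma$, a failure probability at most $2\exp\big(-\tfrac{1}{3}\gamma_N^2p_N|L^+_b|\big)\le2\exp\big(-\tfrac{1}{12}\gamma_N^2p_NN^2\big)$. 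The hypothesis $\gamma_N\ge c/\sqrt{p_NN}$ means $\gamma_N^2p_NN\ge c^2$, so this is at most $2\exp\big(-\tfrac{c^2}{12}N\big)$. A union bound over the at most $2^N$ configurations then yields $\P((\mathcal E^*_{b,N})^c)\le2^{N+1}\exp\big(-\tfrac{c^2}{12}N\big)=2\exp\big((\ln2-\tfrac{c^2}{12})N\big)$.

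For $\mathcal E^*_{nb^+,N}$ and $\mathcal E^*_{nb^-,N}$ the argument is identical after replacing $\vep,p_N,\gamma_N$ by $\delta,q_N,\kappa_N$ and $L^+_b$ by $L^+_{nb}$, resp.\ $L^-_{nb}$; the only thing to verify is that the number of trials of the relevant binomial is still of order $N^2$ on the set of configurations over which the event quantifies. This is precisely why both $\mathcal E^*_{nb^+,N}$ and $\mathcal E^*_{nb^-,N}$ appear: for $\sigma$ with $m_1m_2\ge0$ one has $|L^+_{nb}|=\tfrac{N^2}{4}(m_1m_2+1)\ge N^2/4$, while for $\sigma$ with $m_1m_2<0$ one has, by \eqref{L3}, $|L^-_{nb}|=\tfrac{N^2}{4}(1-m_1m_2)>N^2/4$, so in each case the binomial has $\Theta(N^2)$ trials even though $L^+_{nb}$ or $L^-_{nb}$ individually may be much smaller. (Note also that $\kappa_N\to0$ together with $\kappa_N\ge d/\sqrt{q_NN}$ forces $q_NN\to\infty$, hence $q_N>0$, so these binomials are well defined.) One obtains $\P((\mathcal E^*_{nb^+,N})^c),\P((\mathcal E^*_{nb^-,N})^c)\le2\exp\big((\ln2-\tfrac{d^2}{12})N\big)$. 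Finally I would choose $c,d$ with $c^2>12\ln2$ and $d^2>12\ln2$, put $C_2:=\min(c^2,d^2)/12-\ln2>0$, and add the three bounds to get $\P((\mathcal E_N^*)^c)\le6\exp(-C_2N)\le4C_1N^2\exp(-C_2N)$ for $N$ large and, say, $C_1=2$.

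The main obstacle is the uniformity over all $2^N$ configurations: the per-configuration failure probability is only of order $\exp\big(-\Theta(\gamma_N^2p_NN^2)\big)$, and this can beat the entropy factor $2^N$ \emph{only} because the lower bounds $\gamma_N\ge c/\sqrt{p_NN}$ and $\kappa_N\ge d/\sqrt{q_NN}$ make that exponent linear in $N$, and because the index sets $L^+_b$ and $L^+_{nb}$ or $L^-_{nb}$ are never smaller than $\Theta(N^2)$. Had one instead tried to impose concentration on a set that can be of order $o(N^2)$ — such as $L^+_{nb}$ when $m_1m_2\approx-1$ — the single-configuration estimate would be far too weak to survive the union bound; this is exactly the point of splitting according to the sign of $m_1m_2$. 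The constants $c$ and $d$ must therefore be taken large enough at this step, and this is what fixes the constants alluded to in Proposition~\ref{prop_full_measure}.
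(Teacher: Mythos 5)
Your proof is correct and follows essentially the same route as the paper: a union bound over all $2^N$ configurations combined with exponential (Chernoff-type) concentration for the binomial edge counts, the key points being that $|L^+_b|$, and $|L^+_{nb}|$ resp.\ $|L^-_{nb}|$ on the corresponding sign sets of $m_1m_2$, are at least $N^2/4$, and that $\gamma_N\ge c/\sqrt{p_NN}$, $\kappa_N\ge d/\sqrt{q_NN}$ with $c^2,d^2>12\log 2$ make the per-configuration exponent beat the entropy $N\log 2$. The only differences are cosmetic: you invoke the standard multiplicative Chernoff bound and a crude $2^N$ count, whereas the paper writes out the relative-entropy (exponential Chebyshev) bound, Taylor-expands it to get the same constants, and organizes the union bound by magnetization values via Stirling --- precision it then discards by bounding the $m$-dependent factors by $1$, so your streamlined version even yields the slightly stronger bound $6\exp(-C_2N)$, which trivially implies the stated one.
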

\begin{proof}
Assume that $(\vep, \delta) \notin \mathcal{E}_N^*$.
Then there exists a $\sigma \in \{\pm 1\}^N$ such  either
\begin{equation*}  
\left|\sum_{i \sim j} \vep_{ij} \mathbbm{1}_{i,j \in L^+_b(\sigma)}-p_N |L^+_b(\sigma)| \, \right| > \gamma_N p_N |L^+_b(\sigma)|
\end{equation*}
or
$$
\left|\sum_{i \sim j} \delta_{ij} \mathbbm{1}_{i,j \in L^+_{nb}(\sigma)}-q_N |L^+_{nb}(\sigma)| \, \right| > \kappa_N q_N |L^+_{nb}(\sigma)|, \quad  m_1(\sigma) m_2(\sigma) \ge 0.
$$
or
$$
\left|\sum_{i \sim j} \delta_{ij} \mathbbm{1}_{i,j \in L^-_{nb}(\sigma)}-q_N |L^-_{nb}(\sigma)| \, \right| > \kappa_N q_N |L^-_{nb}(\sigma)|, \quad m_1(\sigma) m_2(\sigma) < 0.
$$

Hence by a union bound
\begin{eqnarray}\label{sum1}
\P((\mathcal{E}_N^*)^c)&\le & \sum_{\sigma: m_1 m_2 \ge 0} \P(A_N(\sigma))+\P(B_N(\sigma))+\P(C_N(\sigma))+\P(D_N(\sigma))\\
&+& \sum_{\sigma: m_1 m_2 < 0} \P(A_N(\sigma))+\P(B_N(\sigma))+\P(C'_N(\sigma))+\P(D'_N(\sigma)).\nonumber
\end{eqnarray}
Here
\begin{eqnarray*}
A_N(\sigma) &:=& \{(\vep,\delta): \sum_{i \sim j} \vep_{ij} \mathbbm{1}_{i,j \in L^+_b} \ge p_N (1+ \gamma_N) |L^+_b|\},\\
B_N(\sigma) &:=& \{(\vep,\delta): \sum_{i \sim j} \vep_{ij} \mathbbm{1}_{i,j \in L^+_b} \le p_N (1- \gamma_N)|L^+_b|\},\\
C_N(\sigma) &:=& \{(\vep,\delta): \sum_{i \sim j} \delta_{ij} \mathbbm{1}_{i,j \in L^+_{nb}} \ge q_N (1+ \kappa_N) |L^+_{nb}|\},\\
D_N(\sigma) &:=& \{(\vep,\delta): \sum_{i \sim j} \delta_{ij} \mathbbm{1}_{i,j \in L^+_{nb}} \le q_N (1- \kappa_N) |L^+_{nb}|\}
\\
C'_N(\sigma) &:=& \{(\vep,\delta): \sum_{i \sim j} \delta_{ij} \mathbbm{1}_{i,j \in L^-_{nb}} \ge q_N (1+ \kappa_N) |L^-_{nb}|\}\quad \mbox{and}\\
D'_N(\sigma) &:=& \{(\vep,\delta): \sum_{i \sim j} \delta_{ij} \mathbbm{1}_{i,j \in L^-_{nb}} \le q_N (1- \kappa_N) |L^-_{nb}|\}.
\end{eqnarray*}
Defining the relative entropy
\begin{align*}
I_p(x&)= x \log \frac x p + (1-x) \log \frac{(1-x)}{(1-p)}  \quad \text{and}
\\
I_q(x)&= x \log \frac x q + (1-x) \log \frac{(1-x)}{(1-q)},
\end{align*}
we obtain by an exponential Chebyshev inequality
\begin{eqnarray*}
\P(A_N) & \le & \exp \left(- |L^+_b(\sigma)| I_p(p_N (1+\gamma_N))\right),\\
\P(B_N) & \le & \exp \left(- |L^+_b(\sigma)| I_p(p_N (1-\gamma_N))\right),\\
\P(C_N) & \le & \exp \left(- |L^+_{nb}(\sigma)| I_q(q_N (1+\kappa_N))\right),\\
\P(D_N) & \le & \exp \left(- |L^+_{nb}(\sigma)| I_q(q_N (1-\kappa_N))\right),\\
\P(C'_N) & \le & \exp \left(- |L^-_{nb}(\sigma)| I_q(q_N (1+\kappa_N))\right)\qquad \mbox{and }\\
\P(D'_N) & \le & \exp \left(- |L^-_{nb}(\sigma)| I_q(q_N (1-\kappa_N))\right).\\
\end{eqnarray*}
In order   to  keep the notation simple, here  we write $I_p$ instead of $I_{p_N}$ in the last formula and $I_q$ instead of $I_{q_N}$.  Note  that $I_p(x)$ is always positive.
Moreover, the quantities $|L^+_b(\sigma)|$, $|L^+_{nb}(\sigma)|$, and $|L^-_{nb}(\sigma)|$ can be expressed in terms of the vector of magnetizations $m$ as observed in \eqref{L1} to \eqref{L3}:
$$
|L^+_b|=\frac{N^2}8 (m_1^2+m_2^2+2)
$$
as well as
$$
|L^+_{nb}|=\frac{N^2}4 (m_1m_2+1)
$$
and
$$
|L^-_{nb}|=\frac{N^2}4 (1-m_1m_2).
$$

We will start by estimating the contributions from the first line in \eqref{sum1}.
Thus decomposing this first sum in \eqref{sum1} according to which vector of magnetizations $m(\sigma)$ we obtain from $\sigma$ and applying the exponential bounds derived above, we arrive at:
\begin{eqnarray*}
&&\P((\mathcal{E}_N^{*,+})^c)\le \sum_{\sigma \atop m_1(\sigma) m_2(\sigma) \ge 0} \P(A_N(\sigma))+\P(B_N(\sigma))+\P(C_N(\sigma)) +\P(D_N(\sigma))\\
&=&\hskip-.5cm  \sum_{m_1, m_2 \atop m_1m_2 \geq 0}\, \sum_{\sigma: m_1(\sigma)=m_1, m_2(\sigma)=m_2} \P(A_N(\sigma))+\P(B_N(\sigma))+\P(C_N(\sigma)) + \P(D_N(\sigma))
\\
&\leq & \hskip-.5cm \sum_{m_1, m_2 \atop m_1 m_2 \ge 0} \binom{\frac N2}{\frac N4 (1+m_1)}\binom{\frac N2}{\frac N4 (1+m_2)}\times\\
&&\hskip-.5cm \times \left\{ \exp \left(-\frac{N^2}8 (m_1^2+m_2^2+2)  I_p(p_N (1+\gamma_N))\right)+
\exp \left(-\frac{N^2}8 (m_1^2+m_2^2+2)  I_p(p_N (1-\gamma_N))\right)
\right.\\
&& \, + \left. \exp \left(-\frac{N^2}4 (m_1m_2+1)  I_q(q_N (1+\kappa_N))\right)+
\exp \left(-\frac{N^2}4 (m_1m_2+1)I_q(q_N (1-\kappa_N))\right)
\right\}.
\end{eqnarray*}
Here and in the sequel, the sums over $m_1$ and $m_2$ are over such values for the $m_i$ that are admissible magnetizations for the given $N$. More precisely, admissible magnetizations means that for $\frac{N}{2}$ even  we consider $\frac{N}{2}m_1, \frac{N}{2}m_2 \in \{0, \pm 2, \pm4, \ldots, \pm \frac{N}{2} \}$ and for $\frac{N}{2}$ odd we consider $\frac{N}{2}m_1, \frac{N}{2}m_2 \in \{\pm1 , \pm 3,  \ldots, \pm \frac{N}{2} \}$.

By Stirling's formula $1 \leq \frac{n!}{\sqrt{2 \pi n}(\frac{n}{e})^n}\leq 2$ 
we have  for $\gamma \in (0,1)$ such that $\gamma M$ is an integer 
\begin{align}
&\binom{M}{\gamma M}
\le C \frac{\sqrt{M} (M/e)^{M}}{\sqrt{\gamma M} \gamma^{\gamma M} M^{\gamma M} \sqrt{(1-\gamma)M} (1-\gamma)^{(1-\gamma)M} M^{(1-\gamma)M} (1/e)^M}
\\& =   C  \frac{1}{\sqrt{\gamma (1-\gamma) M}} \exp(-M (\gamma \log \gamma + (1-\gamma)\log(1-\gamma)))
\end{align}
for some constant $C>0$.
Recall that we want to apply this estimate to  $M=\frac{N}{2}$ and $\gamma =\frac{1+ m_1}{2}$ resp. $\gamma =\frac{1+ m_2}{2}$, i.e.~$\gamma$ takes values in the set $\Gamma_1:=\{\frac{1}{2}, \frac{1}{2} \pm \frac{2}{N}, \frac{1}{2} \pm \frac{4}{N}, \ldots, \frac{1}{2}\pm (\frac{1}{2}-\frac{2}{N}) \}$ for $\frac{N}{2}$ even resp.~in the set $\Gamma_2:=\{\frac{1}{2} \pm \frac{1}{N}, \frac{1}{2} \pm \frac{3}{N}, \ldots, \frac{1}{2}\pm (\frac{1}{2}-\frac{1}{N}) \}$ for $\frac{N}{2}$ odd (in addition to $\gamma =0$ and $\gamma =1$). Hence, for $\gamma \in \Gamma_1 \cup \Gamma_2$ and $M= \frac{N}{2}$ 
 we have
 $$\frac{1}{\sqrt{\gamma (1-\gamma) M} } \to 0\quad \text{for } M \to \infty.  $$
 Hence, for $M=\frac{N}{2}$ large and $\gamma \in \Gamma_1 \cup \Gamma_2$, we have
$$\binom{M}{\gamma M}\le  C  \exp(-M (\gamma \log \gamma + (1-\gamma)\log(1-\gamma))).$$
Here,  the above estimate is also trivially true for $\gamma=0$ and $\gamma=1$, where we set $0 \log 0 =0$.
Applying this to the above binomial coefficients $\binom{\frac N2}{\frac N4 (1+m_1)}$ resp.$\binom{\frac N2}{\frac N4 (1+m_2)}$  yields

\begin{align*}
& \P((\mathcal{E}_N^{*,+})^c)\le \sum_{m_1, m_2 \atop  m_1 m_2 \ge 0} C \exp\left(-\frac N2\left( \frac{1+m_1}2 \log \left(\frac{1+m_1}2\right)+\frac{1-m_1}2 \log \left(\frac{1-m_1}2\right)\right)\right)
\\
& \hskip0.5cm\times \exp\left(-\frac{N}{2}\left( \frac{1+m_2}2 \log \left( \frac{1+m_2}2\right)+\frac{1-m_2}2 \log \left(\frac{1-m_2}2\right)\right)\right)
\\
& \hskip0.5cm \times \left\{ \exp \left(-\frac{N^2}8 (m_1^2+m_2^2+2)  I_p(p_N (1+\gamma_N))\right)+
\exp \left(-\frac{N^2}8 (m_1^2+m_2^2+2)  I_p(p_N (1-\gamma_N))\right)
\right.
\\
&  \hskip0.5cm+ \left. \exp \left(-\frac{N^2}4 (m_1m_2+1)  I_q(q_N (1+\kappa_N))\right)+
\exp \left(-\frac{N^2}4 (m_1m_2+1)I_q(q_N (1-\kappa_N))\right)
\right\}
\end{align*}
\begin{align*}
=& \sum_{m_1, m_2 \atop  m_1 m_2 \ge 0} C 2^N \exp\left(-\frac N2\left( \frac{1+m_1}2 \log (1+m_1)+\frac{1-m_1}2 \log (1-m_1)\right)\right)
\\
&\hskip-0cm \times \exp\left(-\frac{N}{2}\left( \frac{1+m_2}2 \log( 1+m_2)+\frac{1-m_2}2 \log (1-m_2)\right)\right)
\\
&  \times \left\{ \exp \left(-\frac{N^2}8 (m_1^2+m_2^2+2)  I_p(p_N (1+\gamma_N))\right)+
\exp \left(-\frac{N^2}8 (m_1^2+m_2^2+2)  I_p(p_N (1-\gamma_N))\right)
\right.
\\
& + \left. \exp \left(-\frac{N^2}4 (m_1m_2+1)  I_q(q_N (1+\kappa_N))\right)+
\exp \left(-\frac{N^2}4 (m_1m_2+1)I_q(q_N (1-\kappa_N))\right)
\right\}.
\end{align*}

We now may separate
the terms that do not depend on the vector $m$. This gives the bound
\begin{eqnarray*}
&&\exp(N\log 2)\exp(-\frac{N^2}{4}I_0)
\\
&&\sum_{m_1, m_2: \atop m_1 m_2 \ge 0 } \exp\left(-\frac N2\left( \frac{1+m_1}2 \log (1+m_1)+\frac{1-m_1}2 \log (1-m_1)\right)\right)\\
&&\hskip-0cm \times \exp\left(-\frac N2 \left( \frac{1+m_2}2 \log (1+m_2)+\frac{1-m_2}2 \log (1-m_2)\right)\right)\\
&& \times \left\{ \exp \left(-\frac{N^2}8 (m_1^2+m_2^2)  I_p(p_N (1+\gamma_N))\right)+
\exp \left(-\frac{N^2}8 (m_1^2+m_2^2)  I_p(p_N (1-\gamma_N))\right)
\right.\\
&& \hskip-.0cm + \left. \exp \left(-\frac{N^2}4 (m_1m_2)  I_q(q_N (1+\kappa_N))\right)+
\exp \left(-\frac{N^2}4 (m_1m_2)I_q(q_N (1-\kappa_N))\right)
\right\},
\end{eqnarray*}
where we have set
$$
I_0:=I_0(N):= \min\{I_p(p_N (1+\gamma_N)), I_p(p_N (1-\gamma_N)), I_q(q_N (1+\kappa_N)), I_q(q_N (1-\kappa_N)).
$$
Obviously all the terms in the sum are bounded by 1, such that the entire sum is at most
\begin{eqnarray}\label{polyI}
&&\sum_{m_1, m_2 : \atop m_1 m_2 \ge 0 } \exp\left(-\frac N2\left( \frac{1+m_1}2 \log (1+m_1)+\frac{1-m_1}2 \log (1-m_1)\right)\right)\nonumber\\
&&\hskip-0cm \times \exp\left(-\frac N2 \left( \frac{1+m_2}2 \log (1+m_2)+\frac{1-m_2}2 \log (1-m_2)\right)\right)\nonumber \\
&& \times \left\{ \exp \left(-\frac{N^2}8 (m_1^2+m_2^2)  I_p(p_N (1+\gamma_N))\right)+
\exp \left(-\frac{N^2}8 (m_1^2+m_2^2)  I_p(p_N (1-\gamma_N))\right)
\right.\nonumber\\
&& \hskip-.0cm \left. + \exp \left(-\frac{N^2}4 (m_1m_2)  I_q(q_N (1+\kappa_N))\right)+
\exp \left(-\frac{N^2}4 (m_1m_2)I_q(q_N (1-\kappa_N))\right)
\right\}\nonumber \\
&\le& 4 \#\{(m_1,m_2): m_1 m_2 \ge 0 \}= 2 \left(\frac N2+1\right)^2.
\end{eqnarray}
It thus remains to show that $\exp(N\log 2)\exp(-\frac{N^2}{4}I_0)$ is exponentially small in $N$.
Computing the terms contributing to $I_0$ we see that
\begin{align*}
I_p(p_N (1+\gamma_N)) &= p_N (1+\gamma_N)\log (1+\gamma_N)+(1-p_N)\left(1-\frac {p_N \gamma_N}{1-p_N}\right)\log \left(1-\frac {p_N \gamma_N}{1-p_N}\right)
\\
I_p(p_N (1-\gamma_N)) &= p_N (1-\gamma_N)\log (1-\gamma_N)+(1-p_N)\left(1+\frac {p_N \gamma_N}{1-p_N}\right)\log \left(1+\frac {p_N \gamma_N}{1-p_N}\right)
\end{align*}
as well as
\begin{align*}
&I_q(q_N (1+\kappa_N)) = q_N (1+\kappa_N)\log (1+\kappa_N)+(1-q_N)\left(1-\frac {q_N \kappa_N}{1-q_N}\right)\log \left(1-\frac {q_N \kappa_N}{1-q_N}\right)\\
&I_q(q_N (1-\kappa_N)) = q_N (1-\kappa_N)\log (1-\kappa_N)+(1-q_N)\left(1+\frac {q_N \kappa_N}{1-q_N}\right)\log \left(1+\frac {q_N \kappa_N}{1-q_N}\right) .
\end{align*}
By Taylor expansion of the $\log$ function we have
$$
(1+x) \log (1+x) \ge x+\frac{x^2}2(1-\frac x 3) \quad \mbox{and  } (1-x) \log (1-x) \ge -x
$$
as well as
$$
(1+x) \log (1+x) \ge x \quad \mbox{and  } (1-x) \log (1-x) \ge -x +\frac {x^2}2
$$
for $0 \le x <1$.

Using these inequalities to estimate $I_p(p_N (1+\gamma_N))$ and $I_p(p_N (1-\gamma_N))$, respectively, we obtain
\begin{equation*} 
I_p(p_N (1+\gamma_N))  \ge  p_N \frac{\gamma_N^2}2(1-\frac{\gamma_N}3)\ge p_N \frac{\gamma_N^2}3
\end{equation*}
(for $N$ large enough) and
\begin{equation*}
I_p(p_N (1-\gamma_N)) \ge  (1-p_N)\frac{p_N \gamma_N}{1-p_N}+ p_N(-\gamma_N+\frac{\gamma_N^2}2)=p_N\frac{\gamma_N^2}2.
\end{equation*}
Similarly,
\begin{equation*}
I_q(q_N (1+\kappa_N)) \ge  q_N \frac{\kappa_N^2}2(1-\frac{\kappa_N}3)\ge q_N \frac{\kappa_N^2}3
\end{equation*}
(for $N$ large enough) and
\begin{equation*}
I_q(q_N (1-\kappa_N)) \ge  (1-q_N)\frac{q_N \kappa_N}{1-q_N}+ q_N(-\kappa+\frac{\kappa_N^2}2)=q_N\frac{\kappa_N^2}2.
\end{equation*}
Therefore
$$
-\frac N4 I_0 \le \max(-N p_N\frac{\gamma^2_N}{12}, -N q_N\frac{\kappa^2_N}{12}).
$$
Thus if we choose $c=d$ such that $c^2 >12 \log 2$ we see that
$$
\log 2 -\frac N4 I_0 <0.
$$
Therefore we obtain that indeed we can find constants $C_1, C_2 >0$ such that
$$
\P((\mathcal{E}_N^{*,+})^c) \le 4 C_1  N^2 \exp(- C_2 N).
$$

We now turn to estimating the second sum in \eqref{sum1}:
Applying the exponential estimates for the sets $C'_N(\sigma)$ and $D'_N(\sigma)$ together with
\eqref{L3} we obtain

\begin{align*}
&\P((\mathcal{E}_N^{*,-})^c)\le \sum_{\sigma \atop m_1 m_2 < 0} \P(A_N(\sigma))+\P(B_N(\sigma))+\P(C'_N(\sigma)) +\P(D'_N(\sigma))\\
&= \sum_{m_1, m_2: \atop  m_1 m_2 <0}\, \sum_{\sigma: m_1(\sigma)=m_1, m_2(\sigma)=m_2} \P(A_N(\sigma))+\P(B_N(\sigma))+\P(C'_N(\sigma)) + \P(D'_N(\sigma))
\\
&= \sum_{m_1, m_2:  \atop m_1 m_2 < 0} \binom{\frac N2}{\frac N4 (1+m_1)}\binom{\frac N2}{\frac N4 (1+m_2)}\times\\
&\times \left\{ \exp \left(-\frac{N^2}8 (m_1^2+m_2^2+2)  I_p(p_N (1+\gamma_N))\right)+
\exp \left(-\frac{N^2}8 (m_1^2+m_2^2+2)  I_p(p_N (1-\gamma_N))\right)
\right.\\
& \, + \left. \exp \left(-\frac{N^2}4 (1-m_1m_2)  I_q(q_N (1+\kappa_N))\right)+
\exp \left(-\frac{N^2}4 (1-m_1m_2)I_q(q_N (1-\kappa_N))\right)
\right\}\\
&\leq\exp(N\log 2)\exp(-\frac{N^2}{4}I_0)
\sum_{m_1, m_2: \atop m_1 m_2 \le 0 } \exp\left(-\frac N2\left( \frac{1+m_1}2 \log (1+m_1)+\frac{1-m_1}2 \log (1-m_1)\right)\right)\\
&\hskip-0cm \times \exp\left(-\frac N2 \left( \frac{1+m_2}2 \log (1+m_2)+\frac{1-m_2}2 \log (1-m_2)\right)\right)\\
& \times \left\{ \exp \left(-\frac{N^2}8 (m_1^2+m_2^2)  I_p(p_N (1+\gamma_N))\right)+
\exp \left(-\frac{N^2}8 (m_1^2+m_2^2)  I_p(p_N (1-\gamma_N))\right)
\right.\\
& \hskip-.0cm + \left. \exp \left(\frac{N^2}4 (m_1m_2)  I_q(q_N (1+\kappa_N))\right)+
\exp \left(\frac{N^2}4 (m_1m_2)I_q(q_N (1-\kappa_N))\right)
\right\},
\end{align*}
Now as in \eqref{polyI}
\begin{align}\label{polyII}
&\sum_{m_1, m_2: \atop m_1 m_2 \le 0 } \exp\left(-\frac N2\left( \frac{1+m_1}2 \log (1+m_1)+\frac{1-m_1}2 \log (1-m_1)\right)\right)\nonumber\\
&\hskip-0cm \times \exp\left(-\frac N2 \left( \frac{1+m_2}2 \log (1+m_2)+\frac{1-m_2}2 \log (1-m_2)\right)\right)\nonumber\\
& \times \left\{ \exp \left(-\frac{N^2}8 (m_1^2+m_2^2)  I_p(p_N (1+\gamma_N))\right)+
\exp \left(-\frac{N^2}8 (m_1^2+m_2^2)  I_p(p_N (1-\gamma_N))\right)
\right.\nonumber\\
& \hskip-.0cm + \left. \exp \left(\frac{N^2}4 (m_1m_2)  I_q(q_N (1+\kappa_N))\right)+
\exp \left(\frac{N^2}4 (m_1m_2)I_q(q_N (1-\kappa_N))\right)
\right\}\nonumber\\
&\le  4 \#\{(m_1,m_2): m_1 m_2 \le 0 \}= 2 \left(\frac N2+1\right)^2.
\end{align}
Indeed, there is a one to one correspondence between the sums considered in \eqref{polyI} and \eqref{polyII}. By flipping all the spins in one of the blocks in \eqref{polyII} we get one summand in \eqref{polyI}, and by this we at the same time change the contribution of the $e^{\frac{N^2}4 (m_1m_2)I_q(\ldots)}$-terms in \eqref{polyII} to $e^{-\frac{N^2}4 (m_1m_2)I_q(\ldots)}$ as they appear \eqref{polyI}. The rest of the terms remains unaltered. It thus remains to show that $\exp(N\log 2)\exp(-\frac{N^2}{4}I_0)$ is exponentially small in $N$, but these terms do not depend on $m$ and we already showed in the first step of the proof that we can make this term exponentially small by choosing $\gamma_N$ and $\kappa_N$ large enough.
This proves the assertion.
\end{proof}

Proposition \ref{prop_full_measure} now follows immediately:
\begin{proof}[Proof of Proposition \ref{prop_full_measure}]:
Just apply the Borel-Cantelli Lemma. The previous lemma states that
$$
\P((\mathcal{E}_N^*)^c)\le 4 C_1  N^2 \exp(- C_2 N).
$$
for some constants $C_1, C_2>0$. The right hand side is summable, hence $(\mathcal{E}^*)^c$ has probability 0.
\end{proof}

We now start with the proof of Theorem \ref{theo_stat_mech}.
Consider the Hamiltonian of the block spin Curie-Weiss model treated in \cite{BRS17_blockmodel} defined in \eqref{standard_hamil} and take $\lambda=\lambda_N= \alpha \frac{q_N}{p_N}$ in place of the $\alpha$ in \eqref{standard_hamil}, i.e.~we consider
$$
\tilde H_{N,\lambda, \beta}(\sigma):= -\frac \beta {2N} \sum_{i \sim j} \sigma_i \sigma_j -\frac \lambda {2N} \sum_{i \not\sim j} \sigma_i \sigma_j, \quad \sigma \in \{-1,+1\}^N
$$
and the corresponding Gibbs measure. Note that $\lambda$ may depend on $N$ and that according to our assumptions we always have that $0\le | \lambda| < \beta$ if $N$ is large enough. The results obtained on the statistical mechanics of this model are stated in Theorem \ref{theo1} below. There $\lambda$ is fixed, but they automatically generalize to the situation with $\lambda_N$ converging to some fixed value. This fixed value in our case is, of course given by $\alpha a$. We will write
$$
H_{N,\alpha,\beta,\vep,\delta,S}(\sigma)=: \tilde H_{N,\alpha a, \beta}(\sigma)-\overline H_N(\sigma).
$$
For $\sigma$ with $m_1(\sigma) m_2(\sigma) \ge 0$ we can make use of \eqref{rewrite} and a similar way to rewrite the Hamiltonian $\tilde H_{N,\alpha a, \beta, S}(\sigma)$ (which is obtained by simply setting all $\vep_{ij}$ and $\delta_{ij}$ to 1 and changing the pre-factor in front of the second term) to obtain
\begin{eqnarray*}
|\overline H_N(\sigma)|&=& | H_{N,\alpha,\beta,\vep,\delta,S}(\sigma)- \tilde H_{N,\alpha a, \beta}(\sigma)|\\
&=& \left|-\frac \beta {2Np} \left[\sum_{i \sim j} 2\vep_{ij} \mathbbm{1}_{i,j \in L^+_b}- \sum_{i \sim j} \vep_{ij}\right]-\frac \alpha {2Np}\left[
\sum_{i \not\sim j} 2\delta_{ij} \mathbbm{1}_{i,j \in L^+_{nb}}- \sum_{i \not \sim j} \delta_{ij}\right]\right.\\
&&+ \left. \frac \beta {2N} \left[\sum_{i \sim j}  2\mathbbm{1}_{i,j \in L^+_b}- \frac{N^2}2\right]
+\frac{\alpha a}{2N}\left[\sum_{i \not\sim j} 2 \mathbbm{1}_{i,j \in L^+_{nb}}- \frac{N^2}2 \right]\right|\\
&\le & \left| \frac \beta {2Np}\sum_{i \sim j}2 \vep_{ij} \mathbbm{1}_{i,j \in L^+_b}-\frac{\beta}{N}|L_b^+|\right|+ \left| \frac \beta {2Np}\sum_{i \sim j} \vep_{ij}-\frac{\beta N}4\right|\nonumber \\
&&+\left| \frac \alpha {2Np}\sum_{i \sim j}2 \delta_{ij} \mathbbm{1}_{i,j \in L^+_{nb}}-\frac{ \alpha a}{N}|L_{nb}^+|\right|+ \left| \frac \alpha {2Np}\sum_{i \not \sim j} \delta_{ij}
-\frac{\alpha a N}4\right|.\nonumber
\end{eqnarray*}
From Proposition \ref{prop_full_measure} we obtain that for all $(\vep, \delta) \in \mathcal{E}^*$, all $N \in \N$ sufficiently large, and all $\sigma \in \Sigma_N$ we have that
\begin{eqnarray}\label{diff1}
\left| \frac \beta {2Np}\sum_{i \sim j}2 \vep_{ij} \mathbbm{1}_{i,j \in L^+_b}-\frac{\beta}{N}|L_b^+|\right| \le \beta \gamma_N \frac{|L_b^+|}{N} \le \beta \gamma_N \frac{N}2
\end{eqnarray}
by just multiplying the defining property for $\mathcal{E}^*_{b,N}$ by $\frac{\beta}{Np}$ and crudly estimating $|L_b^+|$ by $\frac{N^2}2$. In the same way
\begin{equation*}
\left| \frac \beta {2Np}\sum_{i \sim j} \vep_{ij}-\frac{\beta N}4\right| \le \beta \gamma_N \frac{N}2
\end{equation*}
which we get from \eqref{diff1} by considering the configuration $\sigma_i \equiv 1$.

Applying the same trick to the defining property of $\mathcal{E}^*_{nb^+,N}$ we obtain for $N$ large enough and all $\sigma$
\begin{eqnarray*}
\left| \frac \alpha {2Np}\sum_{i \sim j}2 \delta_{ij} \mathbbm{1}_{i,j \in L^+_{nb}}-\frac{ \alpha q_N}{p_N N}|L_{nb}^+|\right| \le \alpha \kappa_N \frac{q_N}{p_N} \frac{|L_{nb}^+|}{N}.
\end{eqnarray*}
By assumption $\frac{q_N}{p_N} \to a$ such that for any $\vep>0$ and $N$ large enough
$\frac{q_N}{p_N} \in (a-\vep, a+\vep)$. Thus
\begin{equation*}
\left| \frac \alpha {2Np}\sum_{i \sim j}2 \delta_{ij} \mathbbm{1}_{i,j \in L^+_{nb}}-\frac{ \alpha a}{N}|L_{nb}^+|\right| \le \alpha a \kappa_N  N
\end{equation*}
as well as
\begin{equation*} 
\left| \frac \alpha {2Np}\sum_{i \not \sim j} \delta_{ij}
-\frac{\alpha a N}4\right| \le \alpha a \kappa_N  N.
\end{equation*}
These estimates together yield
\begin{eqnarray}\label{diff}
|\overline H_N(\sigma)|
\le \beta \gamma_N N+ 2 a \alpha \kappa_N N
\end{eqnarray}
for $N$ large enough and all $\sigma \in \Sigma_N$.

If $\sigma \in \Sigma_N$ satisfies $m_1(\sigma) m_2(\sigma)<0$, we use \eqref{rewrite2} together with similar computations to again obtain
$$
|\overline H_N(\sigma)| \le  \beta \gamma_N N+ 2 a \alpha \kappa_N N.
$$

We will show in the rest of this section, how the bound on $|\overline H_N(\sigma)|$ allows to transfer the Law of Large Numbers for $m$ proved in \cite{BRS17_blockmodel} to our situation.
There the authors show
\begin{theorem}cf. \cite[Proposition 4.1]{BRS17_blockmodel}\label{theo1}
Consider the model with Hamiltonian
$$
\tilde H_{N,\lambda, \beta}(\sigma):= -\frac \beta {2N} \sum_{i \sim j} \sigma_i \sigma_j -\frac \lambda {2N} \sum_{i \not\sim j} \sigma_i \sigma_j, \quad \sigma \in \{-1,+1\}^N
$$
and the corresponding Gibbs measure
$$
\tilde \mu_{N, \lambda, \beta} (\sigma)=
\frac{e^{-\tilde H_{N,\lambda, \beta}(\sigma)}}{\tilde Z_{N, \lambda,\beta}}.
$$
 assume that $|\lambda| < \beta$ and denote by $\tilde \rho_{N,\lambda,\beta}$ the distribution of $m$ under the Gibbs measure $\tilde \mu_{N, \lambda, \beta} $. Then
\begin{itemize}
\item If $\beta +|\lambda|  \le 2$, then $\tilde \rho_{N,\lambda,\beta}$ weakly converges to the Dirac measure in $(0,0)$.
\item If $\beta +|\lambda|  > 2$ and $\lambda =0$, then $\tilde \rho_{N,\lambda,\beta}$ weakly converges to the mixture of Dirac measures $\frac 1 4 \sum_{ s_1, s_2 \in \{-,+\}} \delta_{(s_1 m^+(\beta/2), s_2 m^+(\beta/2))}$.
\item If $\beta +|\lambda| > 2$ and $\lambda>0$, then $\tilde \rho_{N,\lambda,\beta}$ weakly converges to the mixture of Dirac measures
$\frac 1 2 (\delta_{(m^+(\frac{\lambda+\beta}2),  m^+(\frac{\lambda+\beta}2))}+\delta_{(-m^+(\frac{\lambda+\beta}2),  -m^+(\frac{\lambda+\beta}2))}) $.
\item If $\beta +|\lambda| > 2$ and $\lambda<0$, then $\tilde \rho_{N,\lambda,\beta}$ weakly converges to the mixture of Dirac measures
$\frac 1 2 (\delta_{(m^+(\frac{\beta-\lambda}2),  -m^+(\frac{\beta-\lambda}2))}+\delta_{(-m^+(\frac{\beta-\lambda}2),  m^+(\frac{\beta-\lambda}2))}) $.
\end{itemize}
\end{theorem}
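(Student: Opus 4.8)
The plan is to treat $\tilde\mu_{N,\lambda,\beta}$ as a mean-field model and to read off the limit law of $m=(m_1,m_2)$ from a Laplace/large-deviation analysis of the induced distribution $\tilde\rho_{N,\lambda,\beta}$. Since the Hamiltonian is a function of $m$ alone — by the analogue of \eqref{hamil_new}, $\tilde H_{N,\lambda,\beta}(\sigma)=-\tfrac N8(2\lambda m_1m_2+\beta m_1^2+\beta m_2^2)$ — and each block has $N/2$ spins, one has exactly
\[
\tilde\rho_{N,\lambda,\beta}(\{(x,y)\})=\frac1{\tilde Z_{N,\lambda,\beta}}\binom{N/2}{\frac N4(1+x)}\binom{N/2}{\frac N4(1+y)}\exp\!\Big(\tfrac N8(2\lambda xy+\beta x^2+\beta y^2)\Big)
\]
on the admissible grid. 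First I would apply Stirling's formula exactly as in the proof of the Lemma above, turning the two binomial coefficients into $2^{N}\exp\!\big(-\tfrac N2(I(x)+I(y))+o(N)\big)$ with $I(x)=\tfrac{1+x}2\log(1+x)+\tfrac{1-x}2\log(1-x)$, so that $\tilde\rho_{N,\lambda,\beta}(\{(x,y)\})$ equals, up to a factor independent of $(x,y)$, $\exp(-N\,\tilde G(x,y)+o(N))$ with
\[
\tilde G(x,y):=\frac{I(x)+I(y)}2-\frac\beta8(x^2+y^2)-\frac\lambda4 xy .
\]

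Next I would prove concentration. The function $\tilde G$ is smooth on $(-1,1)^2$ with $\partial_x\tilde G,\partial_y\tilde G\to\pm\infty$ toward the boundary, so its minimum over $[-1,1]^2$ is attained in the interior at critical points; a crude union bound over the $O(N^2)$ grid points, divided by the value of the numerator at a grid point near a minimizer, then shows that $\tilde\rho_{N,\lambda,\beta}$ puts all but an exponentially small mass on any fixed neighbourhood of the set $\mathcal M$ of global minimizers of $\tilde G$. The weak limit is thereby supported on $\mathcal M$, and it remains to determine $\mathcal M$ and then to pin down the weights.

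For $\mathcal M$, the critical point equations are the mean-field equations $x=\tanh(\tfrac{\beta x+\lambda y}2)$, $y=\tanh(\tfrac{\lambda x+\beta y}2)$, i.e.\ $\operatorname{artanh}(x)\pm\operatorname{artanh}(y)=\tfrac{\beta\pm\lambda}2(x\pm y)$. Since $\operatorname{artanh}$ is odd with derivative $\ge1$ (and $>1$ off the origin), the mean value theorem forces $x=y$ whenever $\beta-|\lambda|<2$, and $x=-y$ whenever $\beta+\lambda<2$; hence for $\beta+|\lambda|\le2$ the only critical point, so the unique minimizer, is $(0,0)$, giving $\tilde\rho_{N,\lambda,\beta}\Rightarrow\delta_{(0,0)}$. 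For $\beta+|\lambda|>2$ I would use the algebraic identity $\tilde G(x,y)-\tfrac12\big(\tilde G(x,x)+\tilde G(y,y)\big)=\tfrac\lambda8(x-y)^2$. When $\lambda>0$ this is $\ge0$, so $\tilde G(x,y)\ge\min_m\tilde G(m,m)=2\min_m g_{(\beta+\lambda)/2}(m)$ with $g_b(m):=\tfrac{I(m)}2-\tfrac b4 m^2$, with equality only at $x=y=\pm m^+(\tfrac{\beta+\lambda}2)$; here one invokes the standard one-dimensional facts that for $b>1$ the Curie-Weiss free energy $g_b$ attains its minimum exactly at $\pm m^+(b)$ and that $b\mapsto\min g_b$ is strictly decreasing (which also shows the anti-aligned value $2\min g_{(\beta-\lambda)/2}$ strictly loses). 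The case $\lambda=0$ is immediate since $\tilde G$ decouples, giving $\mathcal M=\{\pm m^+(\beta/2)\}^2$, and the case $\lambda<0$ reduces to $\lambda>0$ via the involution $(x,y)\mapsto(x,-y)$, which maps $\tilde G_\lambda$ to $\tilde G_{-\lambda}$ and yields $\mathcal M=\{\pm(m^+(\tfrac{\beta-\lambda}2),-m^+(\tfrac{\beta-\lambda}2))\}$.

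For the weights no local central limit theorem is needed: $\tilde\mu_{N,\lambda,\beta}$ is invariant under the global flip $\sigma\mapsto-\sigma$, hence $\tilde\rho_{N,\lambda,\beta}$ is invariant under $(m_1,m_2)\mapsto(-m_1,-m_2)$, and when $\lambda=0$ it is moreover invariant under flipping either block separately, hence under all four sign changes of $(m_1,m_2)$. These finite-$N$ symmetry groups act transitively on $\mathcal M$ in the respective regimes, so the asymptotically full mass is split evenly over $\mathcal M$; combined with the concentration step this gives weak convergence to $\tfrac1{|\mathcal M|}\sum_{x\in\mathcal M}\delta_x$, which is precisely the claimed four-point mixture (for $\lambda=0$, $\beta>2$), two-point aligned mixture (for $\lambda>0$), and two-point anti-aligned mixture (for $\lambda<0$). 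I expect the main obstacle to be the classification of $\mathcal M$ in the parameter window where $\beta-|\lambda|>2$ as well, so that aligned, anti-aligned and genuinely mixed solutions of the mean-field equations all coexist; the clean way through is the displayed identity together with the monotonicity of the one-dimensional Curie-Weiss free energy, which collapses the two-dimensional variational problem onto a one-dimensional one.
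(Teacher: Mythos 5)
Your proposal is correct, but it does not follow the paper's route, because the paper does not prove this statement at all: Theorem \ref{theo1} is imported verbatim as Proposition 4.1 of \cite{BRS17_blockmodel}, and the only related machinery the paper develops is the LDP quoted from \cite{LSblock1} (Theorem \ref{LDP}), whose rate-function zeros are then \emph{read off} from the imported theorem rather than computed. What you give is a self-contained proof: exact counting of configurations with prescribed $(m_1,m_2)$ via the two binomial coefficients, Stirling, and a Laplace-type union bound concentrating $\tilde\rho_{N,\lambda,\beta}$ on the global minimizers of $\tilde G(x,y)=\tfrac{I(x)+I(y)}2-\tfrac\beta8(x^2+y^2)-\tfrac\lambda4xy$ --- essentially the same coarse-graining by admissible magnetization pairs that the paper itself uses when it estimates $\P((\mathcal{E}_N^*)^c)$, here applied to the complete-graph model. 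The two genuinely valuable ingredients you add are (i) the identity $\tilde G(x,y)-\tfrac12\bigl(\tilde G(x,x)+\tilde G(y,y)\bigr)=\tfrac\lambda8(x-y)^2$, which I checked and which is correct; for $\lambda>0$ it excludes every off-diagonal point outright (so you do not even need the monotonicity of $b\mapsto\min g_b$ or any classification of mixed solutions of the mean-field equations), and the involution $(x,y)\mapsto(x,-y)$ handles $\lambda<0$, reducing everything to the one-dimensional Curie--Weiss functional $g_b$ with $b=\tfrac{\beta+|\lambda|}2$; and (ii) the finite-$N$ symmetry argument (global flip; block-wise flips when $\lambda=0$) that fixes the weights $\tfrac12$ resp.\ $\tfrac14$ without a local CLT, which is exactly what the Laplace estimate alone cannot give. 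One small correction: in the subcritical step your forcing conditions carry the wrong sign combinations --- the difference equation $\operatorname{artanh}(x)-\operatorname{artanh}(y)=\tfrac{\beta-\lambda}2(x-y)$ forces $x=y$ when $\beta-\lambda\le2$ (not when $\beta-|\lambda|<2$), and the sum equation forces $x=-y$ when $\beta+\lambda\le2$; since $\beta+|\lambda|\le2$ implies both of these, your conclusion that $(0,0)$ is the unique critical point, hence the unique minimizer (the boundary being excluded because $I'(x)\to\pm\infty$ as $x\to\pm1$), does stand, including on the critical line $\beta+|\lambda|=2$, where the strict inequality $\int_y^x(1-t^2)^{-1}\,dt>x-y$ for $x\neq y$ still applies.
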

Of course, we will apply this result with $\lambda=\alpha a$.
We will transfer it to our situation with the help of the following lemma.
\begin{lemma}\label{lemma2}
As in the previous theorem let $\tilde \rho_{N,\alpha a,\beta}$ be the distribution of $m$ under the measure $\tilde \mu_{N, \alpha a, \beta} $ and let $\rho_{N}$ the distribution of $m$ under the measure $\mu_{N} $. Then for all $m=(m_1,m_2)$ and all realizations of the random graph $(\vep, \delta)\in \mathcal{E}^*$ we have that
$$
\exp\left(-2N(\beta \gamma_N + 2\alpha a \kappa_N )\right)\tilde \rho_{N,\alpha a,\beta}(m) \le \rho_{N} (m)
\le  \exp\left(2N(\beta \gamma_N + 2 \alpha a \kappa_N )\right)\tilde \rho_{N,\alpha a,\beta}(m)
$$
\end{lemma}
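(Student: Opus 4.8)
The plan is to read both laws off as ratios of (restricted) partition functions and then absorb the discrepancy of the two Hamiltonians into a single uniform multiplicative error. Writing $\Sigma_N=\{\pm1\}^N$, for any vector $m=(m_1,m_2)$ that is an admissible magnetization for $N$ we have
\[
\rho_N(m)=\frac{\sum_{\sigma:\,m(\sigma)=m} e^{-H_{N,\alpha,\beta,\vep,\delta,S}(\sigma)}}{\sum_{\sigma\in\Sigma_N} e^{-H_{N,\alpha,\beta,\vep,\delta,S}(\sigma)}},\qquad
\tilde\rho_{N,\alpha a,\beta}(m)=\frac{\sum_{\sigma:\,m(\sigma)=m} e^{-\tilde H_{N,\alpha a,\beta}(\sigma)}}{\sum_{\sigma\in\Sigma_N} e^{-\tilde H_{N,\alpha a,\beta}(\sigma)}},
\]
and if $m$ is not an admissible magnetization both sides of the claimed inequality are $0$, so we may assume $\{\sigma:m(\sigma)=m\}\neq\emptyset$. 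By the decomposition $H_{N,\alpha,\beta,\vep,\delta,S}(\sigma)=\tilde H_{N,\alpha a,\beta}(\sigma)-\overline H_N(\sigma)$ one has $e^{-H_{N,\alpha,\beta,\vep,\delta,S}(\sigma)}=e^{\overline H_N(\sigma)}\,e^{-\tilde H_{N,\alpha a,\beta}(\sigma)}$ for every single $\sigma$.

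The key step is to invoke the uniform bound \eqref{diff}: for $(\vep,\delta)\in\mathcal{E}^*$ and all $N$ large enough, $|\overline H_N(\sigma)|\le N(\beta\gamma_N+2\alpha a\kappa_N)=:c_N$ holds for every $\sigma\in\Sigma_N$, the regimes $m_1(\sigma)m_2(\sigma)\ge 0$ and $m_1(\sigma)m_2(\sigma)<0$ being covered by \eqref{rewrite} and \eqref{rewrite2} respectively. Hence $e^{-c_N}\le e^{\overline H_N(\sigma)}\le e^{c_N}$ pointwise in $\sigma$, so this factor can be pulled out of both sums occurring in $\rho_N(m)$ at the cost of a multiplicative constant in $[e^{-c_N},e^{c_N}]$:
\[
e^{-c_N}\sum_{\sigma:\,m(\sigma)=m} e^{-\tilde H_{N,\alpha a,\beta}(\sigma)}\ \le\ \sum_{\sigma:\,m(\sigma)=m} e^{-H_{N,\alpha,\beta,\vep,\delta,S}(\sigma)}\ \le\ e^{c_N}\sum_{\sigma:\,m(\sigma)=m} e^{-\tilde H_{N,\alpha a,\beta}(\sigma)},
\]
and likewise for the full sums, i.e.\ $e^{-c_N}\tilde Z_{N,\alpha a,\beta}\le Z_{N,\alpha,\beta,\vep,\delta,S}\le e^{c_N}\tilde Z_{N,\alpha a,\beta}$. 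Dividing numerator by denominator, the extreme cases pair $e^{c_N}$ in the numerator with $e^{-c_N}$ in the denominator and vice versa, so that
\[
e^{-2c_N}\,\tilde\rho_{N,\alpha a,\beta}(m)\ \le\ \rho_N(m)\ \le\ e^{2c_N}\,\tilde\rho_{N,\alpha a,\beta}(m),
\]
and since $2c_N=2N(\beta\gamma_N+2\alpha a\kappa_N)$ this is exactly the assertion.

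I do not expect any genuine obstacle here: all the work has already been spent on the large-deviation estimate that produced \eqref{diff}. The two points that need a line of care are that \eqref{diff} is truly uniform over $\sigma$ — this is what allows $e^{\pm c_N}$ to leave the sums as a constant instead of being tracked configuration by configuration — and that the case $m_1m_2<0$ is not forgotten, which is precisely why both rewritings \eqref{rewrite} and \eqref{rewrite2} of the Hamiltonian were prepared. Looking ahead, the hypotheses of Proposition \ref{prop_full_measure} force $\gamma_N,\kappa_N\to0$, hence $c_N=o(N)$; since the large deviations of $m$ under $\tilde\mu_{N,\alpha a,\beta}$ run at speed $N$, the correction factor $e^{\pm 2c_N}$ is of strictly smaller exponential order than the $\Theta(N)$ decay of the Gibbs mass away from the limit points guaranteed by Theorem \ref{theo1}, so Lemma \ref{lemma2} will transfer each of its four cases to $\rho_N$ and thereby complete the proof of Theorem \ref{theo_stat_mech} (the weights of the atoms then being fixed by the $\sigma\mapsto-\sigma$ symmetry of $H_{N,\alpha,\beta,\vep,\delta,S}$, together with a further symmetry when $\alpha a=0$).
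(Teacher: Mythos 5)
Your proposal is correct and is exactly the paper's argument: the paper's one-line proof likewise applies the uniform bound \eqref{diff} on $|\overline H_N(\sigma)|$ to both the numerator and the denominator of the Gibbs measure \eqref{gibbs2}, which produces the factor $2$ in the exponent. Your write-up merely spells out the details (restricted partition functions, pointwise factor $e^{\pm c_N}$, the $m_1m_2<0$ case via \eqref{rewrite2}) that the paper leaves implicit.
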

\begin{proof}
The statement of the lemma follows immediately, if we consider the form of the Gibbs measures \eqref{gibbs1} and \eqref{gibbs2} together with the above estimate on the difference of the Hamiltonians \eqref{diff} which we need to apply to the numerator and denominator in the definition \eqref{gibbs2}.
\end{proof}
The final observation we now need to make in order to finish the proof of Theorem \ref{theo_stat_mech} is that the vector $m$ obeys a principle of large deviations (LDP, for short) under $\tilde \mu_{N,\lambda,\beta}$.
Indeed the following holds:

\begin{theorem}\label{LDP}(see \cite{LSblock1} Theorem 2.1) \\
For every $S\subset \{1, \ldots, N\}$  with $|S|=\frac{N}{2}$ the vector $m$ obeys a principle of large deviations (LDP) under the Gibbs measure $\tilde \mu_{N,\lambda,\beta}$, with speed $N$ and rate function
$$
J_m(x):= \sup_{y \in \R^2} [F_m(y)-J(y)]-[F_m(x)-J(x)].
$$
Here $F_m:\R^2 \to \R$ is defined by
\begin{equation*}
F_m(x) = \frac 12 \left(\beta x_1^2+ \beta  x_2^2+ 2\lambda  x_1  x_2\right).
\end{equation*}
Moreover,
$$
J(x):=\frac{1}{2} I\left(2x_1\right)+ \frac{1}{2} I\left(2 x_2\right)
$$
for $x \in \R^2$.
Here
$$
I(x):= \frac 12 (1+x) \log (1+x) + \frac 12 (1-x) \log (1-x).
$$

This implies that the convergence in Theorem \ref{theo1} (for $0\le |\lambda|\le \beta$) is exponentially fast.
\end{theorem}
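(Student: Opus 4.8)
The plan is to recognize Theorem~\ref{LDP} as a two--dimensional instance of the standard Curie--Weiss large--deviation scheme (in the spirit of \cite{Ellis-EntropyLargeDeviationsAndStatisticalMechanics}): one combines a Cram\'er--type LDP for the reference (uniform) measure with a Varadhan tilt by the mean--field energy. First I would fix the uniform measure $P_N$ on $\{-1,+1\}^N$ and observe that, because $|S|=\frac N2$ and the spins in $S$ and in $S^c$ are independent under $P_N$, each of $m_1,m_2$ is (a rescaling of) the empirical mean of $\frac N2$ i.i.d.\ Rademacher variables. By Cram\'er's theorem such an empirical mean obeys an LDP at speed $\frac N2$ with rate function $I$, the Legendre transform of $t\mapsto\log\cosh t$; tensorizing the two independent blocks gives that $m=(m_1,m_2)$ obeys, under $P_N$, an LDP at speed $N$ with the good rate function $J$. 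Equivalently --- and this is really the only computation needed --- the Stirling estimate already carried out in the proof above for the binomial coefficients $\binom{N/2}{\frac N4(1+m_i)}$ shows that the number of configurations with a prescribed value of $m$ is $2^N\exp(-N(J(m)+o(1)))$, so that $P_N(m\in\,\cdot\,)$ has the claimed large--deviation behaviour.

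Next I would note that, in the normalization of $m$ under which $\tilde H_{N,\lambda,\beta}(\sigma)=-N\,F_m(m(\sigma))$ (the normalization of \cite{LSblock1}; it differs from the one in \eqref{hamil_new} only by a harmless rescaling of $m$), the map $\sigma\mapsto F_m(m(\sigma))$ is a polynomial, hence bounded and continuous, functional of the configuration, and $m$ ranges over a fixed compact set. Therefore
$$
\tilde\rho_{N,\lambda,\beta}(\,\cdot\,)=\frac{\int\mathbbm{1}_{\{m(\sigma)\in\,\cdot\,\}}e^{N F_m(m(\sigma))}\,dP_N(\sigma)}{\int e^{N F_m(m(\sigma))}\,dP_N(\sigma)},
$$
and Varadhan's lemma applied to the LDP of the previous step yields at once that $\tilde\rho_{N,\lambda,\beta}$ obeys an LDP at speed $N$ with rate function
$$
J_m(x)=\bigl(J(x)-F_m(x)\bigr)-\inf_{y\in\R^2}\bigl(J(y)-F_m(y)\bigr)=\sup_{y\in\R^2}\bigl(F_m(y)-J(y)\bigr)-\bigl(F_m(x)-J(x)\bigr),
$$
which is exactly the rate function in the statement, and $J_m$ is good because $J$ is.

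For the last clause --- that the convergence in Theorem~\ref{theo1} is exponentially fast --- I would argue as follows. The zero set $\mathcal Z$ of $J_m$ is the set of global maximizers of $F_m-J$; its stationarity equation $\nabla F_m(y)=\nabla J(y)$ reduces, using $I'=\operatorname{arctanh}$, to the coupled mean--field system $x_1=\tanh(\beta x_1+\lambda x_2)$, $x_2=\tanh(\beta x_2+\lambda x_1)$, whose global maximizers --- and, in the symmetric cases, the equal splitting of mass among them, coming from the symmetries $\sigma\mapsto-\sigma$ and, for $\lambda=0$, the block flip --- are precisely the atoms of the limit measure in Theorem~\ref{theo1}; this bifurcation analysis is already carried out in \cite{BRS17_blockmodel}. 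Since $\mathcal Z$ is finite, the LDP upper bound gives, for each $\varepsilon>0$, a constant $c>0$ with $\tilde\rho_{N,\lambda,\beta}(\{x:\operatorname{dist}(x,\mathcal Z)>\varepsilon\})\le e^{-cN}$ for all large $N$, which is the precise sense in which the weak convergence is exponentially fast. Finally, since $F_m$, $J$, hence $J_m$, depend continuously on $\lambda$ on $\{|\lambda|<\beta\}$ and we only ever insert $\lambda=\lambda_N\to\alpha a$ with $|\alpha a|<\beta$, everything passes to the limiting value $\lambda=\alpha a$ without change.

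I expect the main obstacle to lie not in any of these steps --- the LDP is routine Curie--Weiss technology and Varadhan's lemma applies trivially to a bounded continuous functional on a compact range --- but in the variational/bifurcation analysis that identifies $\mathcal Z$ with the support of the limiting measure and fixes the relative weights of its atoms; that analysis, however, is exactly what is borrowed from \cite{BRS17_blockmodel}. What remains to be handled with some care is only bookkeeping: reconciling the normalization of the order parameter $m$ used in the present paper with the one in \cite{LSblock1} in which $\tilde H_{N,\lambda,\beta}=-N\,F_m(m)$ and the rate function takes the stated form.
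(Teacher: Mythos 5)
Your proposal follows essentially the same route as the paper's (sketched) proof: an LDP for $m$ under the uniform measure on $\Sigma_N$ (you obtain it via Cram\'er's theorem/Stirling counting per block where the paper invokes the G\"artner--Ellis theorem, an immaterial difference), followed by exponential tilting via Varadhan's lemma, using that $\tilde H_{N,\lambda,\beta}=-N\,F_m(m)$ is a bounded continuous function of $m$ on its compact range. Your additional bookkeeping on the normalization of $m$ and the deduction of exponential speed from the LDP upper bound away from the zero set of $J_m$ correctly fills in details the paper delegates to \cite{LSblock1} and \cite{BRS17_blockmodel}.
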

\begin{proof}
In \cite{LSblock1} we give a full proof of this result. Therefore, we just sketch the proof here.
The main idea is to first prove an LDP for $m$ under the uniform distribution on $\sigma \in \Sigma_N$. This is not very difficult. One way to obtain it is to compute logarithmic moment generating function and apply the G\"artner-Ellis Theorem \cite[Theorem 2.3.6]{dembozeitouni}. Once the LDP for $m$ under the uniform measure is established, the theorem follows immediately from the exponential form of the Gibbs measure $\tilde \mu_{N,\lambda,\beta}$ (see \eqref{gibbs1}), the fact that $\tilde H_{N,\lambda,\beta}$ can be expressed as a continuous and bounded function of the vector $m$ (see \eqref{hamil_new}), and  the LDP for integrals of exponential functions (see e.g. \cite[Theorem III.17]{den_hollander_large_deviations} -- a direct consequence of Varadhan's Lemma \cite[Theorem 4.3.1]{dembozeitouni}.
\end{proof}
Lemma \ref{lemma2} and Theorem \ref{LDP} imply an LDP for $m$ also under the measure $\mu_{N,\alpha, \beta, \vep,\delta}$.
\begin{corollary} \label{LDP2}
For every $S\subset \{1, \ldots, N\}$  with $|S|=\frac{N}{2}$ and every realization of the disorder
 $(\vep, \delta)\in \mathcal{E}^*$ the vector $m$ obeys an LDP under
$ \mu_{N,\alpha, \beta, \vep,\delta}$, with speed $N$ and rate function $J^a_m(x)$. Here $J_m^a(x)$ is defined as $J_m(x)$ in Theorem \ref{LDP}, where one replaces $\lambda$ in the definition of $F_m$ by $\alpha a$.
\end{corollary}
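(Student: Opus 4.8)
The plan is to transfer the large-deviation principle of Theorem \ref{LDP}, applied with $\lambda=\alpha a$, to the quenched Gibbs measure by means of the two-sided comparison of Lemma \ref{lemma2}; the only point requiring genuine verification is that the multiplicative error appearing there is subexponential in $N$. \emph{Step 1 (the error is $e^{o(N)}$).} Fix a realization $(\vep,\delta)\in\mathcal{E}^*$ and set $\eta_N:=2(\beta\gamma_N+2\alpha a\,\kappa_N)$, so that Lemma \ref{lemma2} reads $e^{-N\eta_N}\tilde\rho_{N,\alpha a,\beta}(m)\le\rho_N(m)\le e^{N\eta_N}\tilde\rho_{N,\alpha a,\beta}(m)$ for every admissible $m$. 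By the choices made in Proposition \ref{prop_full_measure} one has $\gamma_N\to 0$, and $\alpha a\,\kappa_N\to 0$ as well: for $a=0$ this term is simply absent, while for $a>0$ one has $q_N N=(q_N/p_N)(p_N N)\to\infty$, so that $\kappa_N$ may indeed be taken with $\kappa_N\to 0$. Hence $\eta_N\to 0$, i.e.\ $N\eta_N=o(N)$.

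\emph{Step 2 (comparison on sets and conclusion).} Since $\rho_N$ is supported on the finite set of admissible magnetizations, all lying in the compact square $[-1,1]^2$, summing the pointwise bounds of Step 1 over $m\in A$ yields $e^{-N\eta_N}\tilde\rho_{N,\alpha a,\beta}(A)\le\rho_N(A)\le e^{N\eta_N}\tilde\rho_{N,\alpha a,\beta}(A)$ for every Borel set $A\subseteq\R^2$. Taking $\tfrac1N\log$, letting $N\to\infty$ and using $\eta_N\to 0$ gives, for every closed $F$,
$$
\limsup_{N\to\infty}\tfrac1N\log\rho_N(F)\le\limsup_{N\to\infty}\tfrac1N\log\tilde\rho_{N,\alpha a,\beta}(F)\le-\inf_{x\in F}J^a_m(x),
$$
and, for every open $G$,
$$
\liminf_{N\to\infty}\tfrac1N\log\rho_N(G)\ge\liminf_{N\to\infty}\tfrac1N\log\tilde\rho_{N,\alpha a,\beta}(G)\ge-\inf_{x\in G}J^a_m(x),
$$
where in each line the last inequality is Theorem \ref{LDP} for $\tilde\mu_{N,\lambda,\beta}$ at $\lambda=\alpha a$, whose rate function is exactly $J_m$ with $F_m$ built from $\alpha a$, namely $J^a_m$. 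These are the two bounds defining the asserted LDP for $m$ under $\mu_{N,\alpha,\beta,\vep,\delta}$ with speed $N$ and rate function $J^a_m$; and $J^a_m$ inherits from $J_m$ the property of being a good rate function (nonnegative, lower semicontinuous, and $+\infty$ off the compact square $[-\tfrac12,\tfrac12]^2$, hence with compact level sets).

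\emph{On the main difficulty.} Everything above is soft once Step 1 holds, and Step 1 is precisely where the hypotheses of Theorem \ref{theo_stat_mech} enter: $p_N N\to\infty$ (together with $q_N/p_N\to a$) is exactly what allows $\gamma_N$ and $\kappa_N$ to be chosen both admissible — i.e.\ above the $c/\sqrt{p_N N}$ and $d/\sqrt{q_N N}$ thresholds forced by Proposition \ref{prop_full_measure} — and vanishing. Finally, the argument is carried out for an arbitrary $(\vep,\delta)\in\mathcal{E}^*$, and since $\P(\mathcal{E}^*)=1$ the statement holds for $\P$-almost every realization of the random communication graph.
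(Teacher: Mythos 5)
Your proof is correct and takes essentially the same route as the paper's: it transfers the LDP of Theorem \ref{LDP} with $\lambda=\alpha a$ through the two-sided comparison of Lemma \ref{lemma2}, observing that the factor $e^{2N(\beta\gamma_N+2\alpha a\kappa_N)}$ is subexponential because $\gamma_N,\kappa_N\to 0$, and then reads off the upper bound for closed sets and the lower bound for open sets. Your extra remarks (summing the pointwise bound over the finitely many admissible magnetizations, and the goodness of $J_m^a$) are harmless elaborations of the same argument.
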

\begin{proof}
From Lemma \ref{lemma2} for a closed subset $A \subseteq \R^2$ we have that
\begin{eqnarray*}
\limsup_N \frac 1 N \log \rho_{N,\alpha,\beta, \vep,\delta}(A) &\le& \limsup_N \frac 1 N \log (\tilde \rho_{N, a \alpha,\beta}(A)e^{2N(\beta \gamma_N+2a \alpha \kappa_N)})\\
&\le& \inf_{x \in A} J_m^a(x)+ \limsup_N 2(\beta \gamma_N+2a \alpha \kappa_N)
\end{eqnarray*}
and the second summand on the right hand side is 0 according to the assumptions on $\gamma_N$ and
$\kappa_N$. The lower bound for open sets is obtained analogously.
\end{proof}

\begin{proof}[Proof of Theorem \ref{theo_stat_mech}]
Together with the previous results from this section, a decisive observation is now the following: While in \cite{LSblock1} -- in view of a generalization of the model -- we were striving for a computation of the zeros of the rate function $J_m$ without using Theorem \ref{theo1}, with Theorem \ref{theo1} at hand these zeros can be immediately read off: They are exactly the limit points given in Theorem \ref{theo1}. This follows from the general fact that a LDP always implies a Law of Large Numbers: This Law of Large Numbers may be weak (whether there is a Strong Law of Large Numbers depends on the speed in the LDP) and it may be a generalized Law of Large Numbers in the sense that there is more than one limit point. However, the limit points of this generalized Law of Large Numbers are always the zeros of the rate function.

Now since we know the zeros of the rate function in the LDP for $\tilde \rho_{N, a\alpha,\beta}$ we also know the zeros of the rate function in the LDP for $\rho_{N,\alpha,\beta, \vep,\delta}$, because they are the same. But this means that $m$ converges under $\mu_{N,\alpha,\beta, \vep,\delta}$ to the same limit points as under $\tilde \mu_{N,a \alpha, \beta}$ provided that $(\vep,\delta)\in \mathcal{E}^*$ and $a= \lim_{N\to \infty} \frac{q_N}{p_N}$. But this is the statement of Theorem \ref{theo_stat_mech}.

\end{proof}

Finally, we show that our above results allow to approximate the partition function and to compute the limiting free energy per agent.
Indeed, Proposition \ref{prop_full_measure} allows to approximate $Z_{N,\alpha,\beta,S}$ for a large subset of the realizations of the disorder.
We prove
\begin{lemma}\label{ZNestim}
For any fixed disorder $(\vep,\delta)\in \mathcal{E}^*$, i.e.~with probability one, under the condition that $\lim_{N \to \infty}\alpha \frac{q_n}{p_N} = a$, the partition function $Z_{N,\alpha,\beta, \vep,\delta,S}$  can be approximated by the partition function $\tilde Z_{N,a,\beta}$ in the following way:
$$
\exp\left(-\beta \gamma_N N- 2 a \alpha  \kappa_N N\right)\tilde Z_{N,a\alpha ,\beta}\le Z_{N,\alpha,\beta, \vep,\delta,S}\le \exp\left(\beta \gamma_N N+ 2 a\alpha  \kappa_N N\right)\tilde Z_{N,a \alpha,\beta}.
$$
\end{lemma}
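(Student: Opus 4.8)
\textbf{Proof plan for Lemma \ref{ZNestim}.}
The plan is to read off the assertion directly from the pointwise control on the difference of Hamiltonians established in \eqref{diff}, in exactly the same spirit as Lemma \ref{lemma2}, but keeping track only of the normalizing constants rather than of the full distributions of $m$. Recall that we wrote
\[
H_{N,\alpha,\beta,\vep,\delta,S}(\sigma)=\tilde H_{N,\alpha a,\beta}(\sigma)-\overline H_N(\sigma),
\]
and that for every realization $(\vep,\delta)\in\mathcal{E}^*$, every $N$ large enough, and every $\sigma\in\Sigma_N$ — covering both the case $m_1(\sigma)m_2(\sigma)\ge 0$ (via \eqref{rewrite}) and the case $m_1(\sigma)m_2(\sigma)<0$ (via \eqref{rewrite2}) — Proposition \ref{prop_full_measure} yields
\[
|\overline H_N(\sigma)|\le \beta\gamma_N N+2a\alpha\kappa_N N .
\]

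First I would fix $\sigma$ and write $e^{-H_{N,\alpha,\beta,\vep,\delta,S}(\sigma)}=e^{-\tilde H_{N,\alpha a,\beta}(\sigma)}\,e^{\overline H_N(\sigma)}$, and then bound the last factor by $e^{\pm(\beta\gamma_N N+2a\alpha\kappa_N N)}$ using the uniform estimate above. Since these bounds are uniform in $\sigma$, summing over $\sigma\in\Sigma_N=\{-1,+1\}^N$ and pulling the $\sigma$-independent factors out of the sum gives
\[
e^{-(\beta\gamma_N N+2a\alpha\kappa_N N)}\,\tilde Z_{N,a\alpha,\beta}\;\le\; Z_{N,\alpha,\beta,\vep,\delta,S}\;\le\; e^{\beta\gamma_N N+2a\alpha\kappa_N N}\,\tilde Z_{N,a\alpha,\beta},
\]
which is the claimed two-sided bound. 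Note that the right-hand quantity carries no dependence on the block $S$, since $\tilde Z_{N,a\alpha,\beta}$ is invariant under relabelling the vertices; this is why the dependence on $S$ on the left is harmless. The factor here is $\beta\gamma_N N+2a\alpha\kappa_N N$ rather than twice that (as in Lemma \ref{lemma2}), because only a single sum is involved, not a ratio of two.

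There is essentially no hard step: the lemma is a one-line corollary of \eqref{diff}. The only point deserving a word of care is that \eqref{diff} was obtained separately on the two regions $\{m_1m_2\ge 0\}$ and $\{m_1m_2<0\}$, so one should state explicitly that the bound on $|\overline H_N(\sigma)|$ holds uniformly over all of $\Sigma_N$ before summing. As an immediate consequence — which I would mention as a closing remark — combining this estimate with $\gamma_N,\kappa_N\to 0$ and the standard limit of $\tfrac1N\log\tilde Z_{N,a\alpha,\beta}$ identifies the limiting free energy per agent $\lim_{N\to\infty}\tfrac1N\log Z_{N,\alpha,\beta,\vep,\delta,S}$, $\P$-almost surely, with that of the block-spin Curie--Weiss model at coupling $(\alpha a,\beta)$.
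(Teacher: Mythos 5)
Your argument is correct and is essentially the paper's own proof: the lemma is deduced directly from the uniform bound \eqref{diff} on $|\overline H_N(\sigma)|$ over all of $\Sigma_N$ (both sign regions of $m_1m_2$), by factoring $e^{-H}=e^{-\tilde H}e^{\overline H}$ and summing over configurations. Nothing further is needed.
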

\begin{proof}
The estimate is an immediate consequence of the estimated uniform difference between the Hamiltonians $H$ and $\tilde H$ on $\mathcal{E}^*$, i.e.~\eqref{diff}.
\end{proof}
As an immediate consequence we obtain that for all configurations $(\vep,\delta)\in \mathcal{E}^*$ the free energy of our model exists and equals the free energy of the model treated in \cite{BRS17_blockmodel}.

\begin{corollary}
In our model, for each fixed disorder $(\vep,\delta)\in \mathcal{E}^*$, i.e.~with probability one, the free energy
$$
f:=\lim_{N\to \infty}\frac 1N \log Z_{N,\alpha,\beta, \vep,\delta,S}
$$
exists and satisfies
$$ f= \tilde f :=  \lim_{N\to \infty}\frac 1N \log \tilde Z_{N,a \alpha,\beta}.
$$
\end{corollary}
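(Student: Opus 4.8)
The plan is to obtain the corollary directly from the partition-function sandwich in Lemma~\ref{ZNestim}, once one knows that the reference free energy $\tilde f$ exists. Fix a disorder $(\vep,\delta)\in\mathcal{E}^*$. Taking logarithms in
$$
\exp\left(-\beta \gamma_N N- 2 a \alpha  \kappa_N N\right)\tilde Z_{N,a\alpha ,\beta}\le Z_{N,\alpha,\beta, \vep,\delta,S}\le \exp\left(\beta \gamma_N N+ 2 a\alpha  \kappa_N N\right)\tilde Z_{N,a \alpha,\beta}
$$
and dividing by $N$ gives
$$
\frac1N\log\tilde Z_{N,a\alpha,\beta}-\left(\beta\gamma_N+2a\alpha\kappa_N\right)\ \le\ \frac1N\log Z_{N,\alpha,\beta,\vep,\delta,S}\ \le\ \frac1N\log\tilde Z_{N,a\alpha,\beta}+\left(\beta\gamma_N+2a\alpha\kappa_N\right).
$$
By the hypotheses of Proposition~\ref{prop_full_measure} the sequences $\gamma_N$ and $\kappa_N$ tend to $0$, so both correction terms vanish as $N\to\infty$. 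Consequently $\liminf_N\frac1N\log Z_{N}$ and $\limsup_N\frac1N\log Z_{N}$ agree with the corresponding $\liminf$ and $\limsup$ of $\frac1N\log\tilde Z_{N,a\alpha,\beta}$; in particular $f$ exists as soon as $\tilde f$ does, and then $f=\tilde f$.

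It thus remains to establish existence of $\tilde f=\lim_N\frac1N\log\tilde Z_{N,a\alpha,\beta}$. Here I would use the elementary large deviation principle recalled in the proof of Theorem~\ref{LDP}: by \eqref{hamil_new} the Hamiltonian $\tilde H_{N,a\alpha,\beta}$ equals $-N\,\Phi(m(\sigma))$ for the fixed bounded continuous (indeed polynomial) function $\Phi(x):=\tfrac18\left(2a\alpha x_1x_2+\beta x_1^2+\beta x_2^2\right)$, and the vector $m$ lives in the compact square $[-1,1]^2$ and satisfies an LDP with speed $N$ under the uniform measure on $\Sigma_N$ (the G\"artner--Ellis argument mentioned there gives a convex entropy rate function). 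Writing $\tilde Z_{N,a\alpha,\beta}=2^N\,\E_{\mathrm{unif}}\!\left[e^{N\Phi(m(\sigma))}\right]$ and applying Varadhan's Lemma \cite[Theorem 4.3.1]{dembozeitouni} yields that $\frac1N\log\tilde Z_{N,a\alpha,\beta}$ converges, with
$$
\tilde f=\log 2+\sup_{x\in[-1,1]^2}\left[\Phi(x)-J_0(x)\right],
$$
where $J_0$ is that entropy rate function. (Alternatively one may simply invoke the classical fact that the free energy of the bipartite Curie--Weiss model with parameters $(a\alpha,\beta)$ exists, cf.\ \cite{gallo_contucci_CW,fedele_contucci}.)

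There is no genuinely hard step here: the argument is a logarithmic squeeze plus a standard Varadhan computation. The only point needing a word of care is the exchange of limits implicit in passing from Lemma~\ref{ZNestim} to $f=\tilde f$, namely that the error exponents $\beta\gamma_N+2a\alpha\kappa_N$ are uniform over $\sigma\in\Sigma_N$ — which is exactly what \eqref{diff} provides — and are deterministic once $(\vep,\delta)\in\mathcal{E}^*$ is fixed; granting this, the squeeze is immediate and the corollary follows.
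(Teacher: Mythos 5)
Your proof is correct and takes essentially the same route as the paper: a logarithmic squeeze obtained from the sandwich in Lemma \ref{ZNestim} together with the fact that $\gamma_N$ and $\kappa_N$ tend to $0$. The only addition is your explicit verification, via the LDP/Varadhan argument, that $\tilde f$ exists, a point the paper leaves implicit since it is already covered by Theorem \ref{LDP} and the known results for the fully connected block model.
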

\begin{proof}
This is obvious from Lemma \ref{ZNestim} and the fact that $\gamma_N$ and $\kappa_N$ converge to $0$, as $N \to \infty$.
\end{proof}

\bibliographystyle{abbrv}
\bibliography{LiteraturDatenbank}
\end{document}